 \def\cocoa{{\hbox{\rm C\kern-.13em o\kern-.07em C\kern-.13em o\kern-.15em A}}}
\newtheorem{theorem}{Theorem}[section]
\newtheorem{proposition}[theorem]{Proposition}
\newtheorem{corollary}[theorem]{Corollary}
\theoremstyle{definition}
\newtheorem{remark}[theorem]{Remark}
\newcommand {\Hom}{\mathcal{H}\kern -0.25ex{\mathit om}}
\newcommand {\Ext}{\mathcal{E}\kern -0.25ex{\mathit xt}}
\newcommand {\rk}{\mathrm{rk}}
\newcommand {\Hilb}{\mathcal{H}\kern -0.25ex{\mathit ilb\/}}
\newcommand {\cK}{\mathcal{K}}
\newcommand {\cA}{\mathcal{A}}
\newcommand {\cB}{\mathcal{B}}
\newcommand {\bZ}{\mathbb{Z}}
\newcommand {\bP}{\mathbb{P}}
\newcommand{\cU}{{\mathcal U}}
\newcommand{\cE}{{\mathcal E}}
\newcommand{\cF}{{\mathcal F}}
\newcommand{\cM}{{\mathcal M}}
\newcommand{\cO}{{\mathcal O}}
\newcommand{\cG}{{\mathcal G}}
\newcommand{\cP}{{\mathcal P}}
\newcommand{\cI}{{\mathcal I}}
\newcommand{\Pic}{\operatorname{Pic}}
\def\p#1{{\bP^{#1}}}
\def\ga#1{{{\accent"12 #1}}}
\title[Ulrich bundles on non--special surfaces with $p_g=0$ and $q=1$]{Ulrich bundles on non--special surfaces \\ with $p_g=0$ and $q=1$}
\subjclass[2010]{Primary 14J60; Secondary 14J26, 14J27, 14J28}
\keywords{Vector bundle, Ulrich bundle.}
\author[Gianfranco Casnati]{Gianfranco Casnati}
\thanks{The author is a member of GNSAGA group of INdAM and is supported by the framework of PRIN 2015 \lq Geometry of Algebraic Varieties\rq, cofinanced by MIUR}
\begin{document}

  \begin{abstract}
\noindent Let $S$ be a surface with $p_g(S)=0$, $q(S)=1$ and endowed with a very ample line bundle $\mathcal O_S(h)$ such that $h^1\big(S,\mathcal O_S(h)\big)=0$. We show that such an $S$ supports families of dimension $p$ of pairwise non--isomorphic, indecomposable, Ulrich bundles for arbitrary large $p$. Moreover, we show that $S$ supports stable Ulrich bundles of rank $2$ if the genus of the general element in $\vert h\vert$ is at least $2$. 
  \end{abstract}

\maketitle

\section{Introduction and Notation}
Throughout the whole paper we will work on an uncountable algebraically closed field $k$ of characteristic $0$ and $\p N$ will denote the projective space over $k$ of dimension $N$. The word surface will always denote a projective smooth connected surface.

If $X$ is a smooth variety, then the study of vector bundles supported on $X$ is an important tool for understanding its geometric properties. If $X\subseteq\p N$, then $X$ is naturally polarised by the very ample line bundle $\cO_X(h):=\cO_{\p N}(1)\otimes\cO_X$: in this case, at least from a cohomological point of view, the simplest bundles $\cF$ on $X$ are the ones which are {\sl Ulrich with respect to $\cO_X(h)$}, i.e. such that 
$$
h^i\big(X,\cF(-ih)\big)=h^j\big(X,\cF(-(j+1)h)\big)=0
$$
for each $i>0$ and $j<\dim(X)$. 

The existence of Ulrich bundles on each variety is a problem raised by D. Eisenbud and F.O. Schreyer in \cite{E--S--W} (see \cite{Bea3} for a survey on Ulrich bundles). There are many partial results (e.g. see \cite{A--C--MR}, \cite{A--F--O}, \cite{Bea}, \cite{Bea1}, \cite{Bea2}, \cite{B--N},  \cite{C--H1}, \cite{C--H2}, \cite{C--G}, \cite{C--K--M1}, \cite{C--K--M2}, \cite{MR}, \cite{MR--PL1}, \cite{MR--PL2}, \cite{PL--T}). Nevertheless, all such results and those ones proved in \cite{F--PL} seem to suggest that Ulrich bundles exist at least when $X$ satisfies an extra technical condition, namely that $X$ is {\sl arithmetically Cohen--Macaulay}, i.e. projectively normal and such that
$$
h^i\big(X,\cO_S(th)\big)=0
$$
for each $i=1,\dots,\dim(X)-1$ and $t\in\bZ$. When $X$ is not arithmetically Cohen--Macaulay, the literature is very limited (e.g. see \cite{Bea2} and \cite{Cs}).

Now let $S\subseteq\p N$ be a surface and set $p_g(S):=h^2\big(S,\cO_S\big)$, $q(S):=h^1\big(S,\cO_S\big)$, whence $\chi(\cO_S):=1-q(S)+p_g(S)=0$.  Thanks to the Enriques--Kodaira classification of surfaces, we know that $\kappa(S)\le1$ and $K_S^2\le0$ (see \cite{BeaBook}, Theorem X.4 and Lemma VI.1). In what follows we will denote by $\Pic(S)$  the Picard group of $S$: it is a group scheme and the connected component $\Pic^0(S)\subseteq\Pic(S)$  of the identity is an abelian variety of dimension $q(S)$ parameterizing the line bundles algebraically equivalent to $\cO_S$. 

In this paper we first rewrite the proof of Proposition 6 of  \cite{Bea3}, in order to be able to extend its statement to a slightly wider class of surfaces. 

Our modified statement  (which holds also without the hypothesis that $k$ is uncountable) is as follows: recall that $\cO_S(h)$ is called {\sl special} if $h^1\big(S,\cO_S(h)\big)\ne0$, {\sl non--special} otherwise. 

\begin{theorem}
\label{tExistence}
Let $S$ be a surface with $p_g(S)=0$, $q(S)=1$ and endowed with a very ample non--special line bundle $\cO_S(h)$.

If $\cO_S(\eta)\in\Pic^0(S)\setminus\{\ \cO_S\ \}$ is such that $h^0\big(S,\cO_S(K_S\pm\eta)\big)=h^1\big(S,\cO_S(h\pm\eta)\big)=0$, then for each general $C\in\vert \cO_S(h)\vert$ and each general set $Z\subseteq C$ of $h^0\big(S,\cO_S(h)\big)$ points, there is a rank $2$ Ulrich bundle $\cE$ with respect to $\cO_S(h)$ fitting into the exact sequence
\begin{equation}
\label{seqUlrich}
0\longrightarrow\cO_S(h+K_S+\eta)\longrightarrow\cE\longrightarrow\cI_{Z\vert S}(2h+\eta)\longrightarrow0.
\end{equation}
\end{theorem}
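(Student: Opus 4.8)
The statement is built for the Serre--Hartshorne correspondence, so the plan is to split the proof into two independent tasks: first, to produce a \emph{locally free} sheaf $\cE$ sitting in a sequence of the shape \eqref{seqUlrich}; second, to verify that any such $\cE$ is Ulrich. For the existence, I would read \eqref{seqUlrich} as an element of $\ext^1\big(\cI_{Z\vert S}(2h+\eta),\cO_S(h+K_S+\eta)\big)$, and recall that the middle term of the extension is locally free precisely when the class maps to a nowhere--vanishing section of the length--$\vert Z\vert$ skyscraper $\Ext^1\big(\cI_{Z\vert S}(2h+\eta),\cO_S(h+K_S+\eta)\big)$. Since $\Hom(\cI_{Z\vert S},\cO_S)=\cO_S$, the twist governing the associated Cayley--Bacharach condition is $\cO_S(K_S)\otimes\cO_S(2h+\eta)\otimes\cO_S(-h-K_S-\eta)=\cO_S(h)$; thus I must arrange that $Z$ satisfies Cayley--Bacharach with respect to $\cO_S(h)$.

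To control both the extension group and this condition I would pass to the curve $C$. Serre duality identifies $\ext^1\big(\cI_{Z\vert S}(2h+\eta),\cO_S(h+K_S+\eta)\big)$ with the dual of $H^1\big(S,\cI_{Z\vert S}(h)\big)$. Writing $S\subseteq\bP^N$ with $N+1=h^0\big(S,\cO_S(h)\big)=\vert Z\vert$, the hyperplane section $C$ spans a $\bP^{N-1}$, so the $N+1$ general points of $Z\subseteq C$ impose exactly $N$ conditions on $\vert\cO_S(h)\vert$; hence $h^0\big(S,\cI_{Z\vert S}(h)\big)=1$ and the extension group is one--dimensional, generated by the class of \eqref{seqUlrich}. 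The Cayley--Bacharach condition at $z\in Z$ amounts to $h^0\big(S,\cI_{Z\setminus\{z\}\vert S}(h)\big)=h^0\big(S,\cI_{Z\vert S}(h)\big)=1$, i.e. to the statement that each $N$--point subset of $Z$ still spans the $\bP^{N-1}$ cut out by $C$. For $Z$ general on $C$ this holds by the general position principle for points on the irreducible nondegenerate curve $C\subseteq\bP^{N-1}$, so the generator yields a locally free $\cE$.

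For the Ulrich property I would check $h^\bullet\big(S,\cE(-h)\big)=h^\bullet\big(S,\cE(-2h)\big)=0$. Twisting \eqref{seqUlrich} by $-h$ and using Serre duality together with $\chi(\cO_S)=0$, $p_g(S)=0$, $q(S)=1$, non--speciality and the hypotheses $h^0\big(S,\cO_S(K_S\pm\eta)\big)=h^1\big(S,\cO_S(h\pm\eta)\big)=0$, the two line--bundle terms vanish, so $h^\bullet\big(S,\cE(-h)\big)=0$ reduces to $h^\bullet\big(S,\cI_{Z\vert S}(h+\eta)\big)=0$, i.e. to $Z$ imposing independent conditions on $\vert\cO_S(h+\eta)\vert$. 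For the twist by $-2h$ I would exploit that $\cE$ has rank $2$ with $\det\cE=\cO_S(3h+K_S+2\eta)$, so Serre duality gives $h^i\big(S,\cE(-2h)\big)=h^{2-i}\big(S,\cE(-h-2\eta)\big)$; twisting \eqref{seqUlrich} by $-h-2\eta$ and running the same bookkeeping reduces these vanishings to $Z$ imposing independent conditions on $\vert\cO_S(h-\eta)\vert$. The decisive point is that, unlike for $\cO_S(h)$, the sections of $\cO_S(h\pm\eta)$ restrict isomorphically to $C$ --- because $h^0\big(S,\cO_S(\pm\eta)\big)=h^1\big(S,\cO_S(\pm\eta)\big)=0$ forces $H^0\big(S,\cI_{C\vert S}(h\pm\eta)\big)=0$ --- so $C$ becomes nondegenerate in the $\vert h\pm\eta\vert$--embedding and its general $N+1$ points impose independent conditions.

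The routine part is the cohomological bookkeeping, which is forced by the hypotheses and Riemann--Roch once the correct twists are in place. The main obstacle I anticipate is the genericity package: showing that a single general pair $(C,Z)$ simultaneously realises one--dimensionality of the extension group, the Cayley--Bacharach property for $\cO_S(h)$, and independence of $Z$ on the two systems $\vert\cO_S(h\pm\eta)\vert$. All of these are general position statements, and the crux is that twisting by the nontrivial $\eta\in\Pic^0(S)$ removes the degeneracy that $Z\subseteq C$ otherwise imposes on $\vert\cO_S(h)\vert$, turning $C$ into a nondegenerate curve for $\vert h\pm\eta\vert$.
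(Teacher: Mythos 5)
Your proposal is correct and takes essentially the same route as the paper's proof: existence of $\cE$ via the Serre/Cayley--Bacharach correspondence for $h^0\big(S,\cO_S(h)\big)$ general points of $C$ spanning the hyperplane $H$, followed by reducing the Ulrich property to $h^0\big(S,\cI_{Z\vert S}(h\pm\eta)\big)=0$, which is proved by restricting to $C$ and combining the generality of $Z$ with the hypotheses on $\eta$. The differences are only in bookkeeping: you phrase Hartshorne--Serre via local-to-global Ext and check the full cohomology vanishing of $\cE(-h)$ and $\cE(-2h)$ directly, where the paper cites Theorem \ref{tCB} and Proposition \ref{pUlrich}, and you route the key hypotheses through the restriction isomorphism for $\cO_S(h\pm\eta)$ rather than through the Serre-dual vanishing $h^0\big(C,i^*\cO_S(K_S\mp\eta)\big)=0$.
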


As pointed out in \cite{Bea3}, Proposition 6, when $S$ is a bielliptic surface then each very ample line bundle $\cO_S(h)$ is automatically non--special and there always exists a non--trivial $\cO_S(\eta)\in\Pic^0(S)$ of order $2$ satisfying the above vanishings: thus the  bundle $\cE$ defined in Theorem \ref{tExistence} is actually {\sl special}, i.e. $c_1(\cE)=3h+K_S$. We can argue similarly if $S$ is either {\sl anticanonical}, i.e. $\vert-K_S\vert\ne\emptyset$, or geometrically ruled.

A condition forcing the indecomposability of a coherent sheaf $\cF$ on an $n$--dimensional variety $X$ is its stability. Recall that the {\sl slope} $\mu(\cF)$ and the {\sl reduced Hilbert polynomial} $p_{\cF}(t)$ of $\cF$ with respect to the very ample polarisation $\cO_X(h)$ are 
$$
\mu(\cF)= c_1(\cF)h^{n-1}/\rk(\cF), \qquad p_{\cF}(t)=\chi(\cF(th))/\rk(\cF).
$$
The coherent sheaf $\cF$ is called $\mu$--semistable (resp. $\mu$--stable) if for all subsheaves $\mathcal G$ with $0<\rk(\mathcal G)<\rk(\cF)$ we have $\mu(\mathcal G) \le \mu(\cF)$ (resp. $\mu(\mathcal G)< \mu(\cF)$).

The coherent sheaf $\cF$ is called semistable (resp. stable) if for all $\mathcal G$ as above $p_{\mathcal G}(t) \le  p_{\cF}(t)$ (resp. $p_{\mathcal G}(t) <  p_{\cF}(t)$) for $t\gg0$. 

On an arbitrary variety we have the following chain of implications
$$
\text{$\cF$ is $\mu$--stable}\Rightarrow\text{$\cF$ is stable}\Rightarrow\text{$\cF$ is semistable}\Rightarrow\text{$\cF$ is $\mu$--semistable.}
$$
Nevertheless, when we restrict our attention to Ulrich bundles, the two notions of (semi)stability and $\mu$--(semi)stability actually coincide.

A priori, it is not clear whether the bundles constructed in Theorem \ref{tExistence} are stable. In Section \ref{sStability} we deal with their stability as follows.
The {\sl sectional genus} of $S$ with respect to $\cO_S(h)$ is defined as the genus of a general element of $\vert h\vert$. By the adjunction formula 
$$
\pi(\cO_S(h)):=\frac{h^2+hK_S}2+1.
$$
Notice that the equality $\pi(\cO_S(h))=0$ would imply the rationality of $S$ (e.g. see \cite{A--S} and the references therein), contradicting $q(S)=1$. Thus $\pi(\cO_S(h))\ge1$ in our setup. 

\begin{theorem}
\label{tStable}
Let $S$ be a surface with $p_g(S)=0$, $q(S)=1$ and endowed with a very ample non--special line bundle $\cO_S(h)$.

If $\pi(\cO_S(h))\ge2$, then the bundle $\cE$ constructed in Theorem \ref{tExistence} from a very general set $Z\subseteq C\subseteq S$ of $h^0\big(S,\cO_S(h)\big)$ points is stable.
\end{theorem}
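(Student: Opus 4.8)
The plan is to use the equivalence (recalled just before the statement) that an Ulrich bundle is stable if and only if it is $\mu$--stable, so it suffices to rule out destabilizing saturated rank $1$ subsheaves of $\cE$; since such a subsheaf of a locally free sheaf on a smooth surface is reflexive of rank one, hence a line bundle, I would work with line subbundles $\cM\hookrightarrow\cE$. First I would record the numerical data. As $\eta\in\Pic^0(S)$ is numerically trivial, \eqref{seqUlrich} gives $\mu(\cE)=(3h^2+hK_S)/2$, while $\mu\big(\cO_S(h+K_S+\eta)\big)=(h+K_S)h=h^2+hK_S$. By Riemann--Roch together with $p_g(S)=0$, $q(S)=1$ and $h^1\big(S,\cO_S(h)\big)=0$, the number of points of $Z$ is $d:=h^0\big(S,\cO_S(h)\big)=\chi\big(\cO_S(h)\big)=(h^2-hK_S)/2$; since $\cO_S(h)$ is very ample this is at least $3$, which forces $h^2>hK_S$ and hence $h^2\big(S,\cO_S(h)\big)=h^0\big(S,\cO_S(K_S-h)\big)=0$, making $\vert Z\vert=d$ rigorous.

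Assume now that $\cE$ is not $\mu$--stable and choose a line subbundle $\cM\hookrightarrow\cE$ with $\cM\cdot h\ge\mu(\cE)$. I would split according to whether the composition $\cM\to\cE\to\cI_{Z\vert S}(2h+\eta)$ vanishes. If it does, then $\cM$ injects into $\cO_S(h+K_S+\eta)$, so $\cM\cdot h\le h^2+hK_S<\mu(\cE)$, the strict inequality being exactly $h^2>hK_S$; this contradicts the choice of $\cM$ and disposes of the ``sub'' case without even using $\pi\ge2$. In the remaining case the composition $\cM\to\cI_{Z\vert S}(2h+\eta)$ is nonzero, hence injective, so $\cM\hookrightarrow\cO_S(2h+\eta)$ factors through $\cI_{Z\vert S}(2h+\eta)$: the class $B:=2h+\eta-\cM$ is effective and a defining section vanishes along $Z$, i.e. $Z\subseteq B$. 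Here $B\cdot h=2h^2-\cM\cdot h\le 2h^2-\mu(\cE)=d$.

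The key geometric step is to pin down $B$ using the general curve $C\in\vert h\vert$, which by Bertini is smooth and irreducible of genus $\pi$. If $C$ were a component of $B$ then $B\cdot h\ge C\cdot h=h^2$, forcing $h^2\le d=(h^2-hK_S)/2$, i.e. $\pi\le1$; so $\pi\ge2$ guarantees that $B$ and $C$ share no component. Consequently $B\cap C$ is an effective divisor of degree $B\cdot C=B\cdot h\le d$ on $C$ containing $Z$, whence $d=\vert Z\vert\le B\cdot h\le d$. All inequalities are therefore equalities: $B\cdot h=d$, $\cM\cdot h=\mu(\cE)$, and $B\cap C=Z$ scheme--theoretically, so that $\cO_C(Z)=\cO_S(B)\vert_C$.

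Finally I would derive a contradiction from the very generality of $Z$. The class $\cO_S(B)\vert_C$ lies in the image under the restriction map $\Pic(S)\to\Pic(C)$ of the set of classes $D$ with $D\cdot h=d$. Because $\NS(S)$ is finitely generated, this set is a \emph{countable} union of $\Pic^0(S)$--torsors, so its image is a countable union of translates of the image of $\Pic^0(S)\to\Pic^0(C)$, each of dimension at most $q(S)=1$. On the other hand, letting $Z$ run over very general $d$--tuples of points of $C$, the class $\cO_C(Z)$ runs over a very general point of the Abel image $W_d\subseteq\Pic^d(C)$, which has dimension $\min(d,\pi)\ge2$ precisely because $\pi\ge2$ and $d\ge3$. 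Since $k$ is uncountable, a very general such $Z$ makes $\cO_C(Z)$ avoid the above countable union of at most $1$--dimensional subvarieties, contradicting $\cO_C(Z)=\cO_S(B)\vert_C$. Hence no destabilizing $\cM$ exists and $\cE$ is stable. The crux—and the only place where $\pi\ge2$ is essential—is this last dimension count on $\Pic(C)$; the delicate point to verify carefully is that the finite generation of $\NS(S)$ confines all candidate restrictions to a countable union of curves, so that very generality of $Z$ can defeat it.
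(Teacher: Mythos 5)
Your proof is correct, and although it shares the paper's overall skeleton---reduce stability to $\mu$--stability via Theorem \ref{tUnstable}, split according to whether the destabilizing line bundle factors through the kernel $\cO_S(h+K_S+\eta)$ or maps nontrivially to $\cI_{Z\vert S}(2h+\eta)$, then kill the second case by a countability argument resting on the finite generation of $\NS(S)$, on $q(S)=1$ and on the uncountability of $k$---the decisive dimension count is genuinely different. The paper first invokes part (3) of Theorem \ref{tUnstable} to force the destabilizer $\cO_S(D)$ to be Ulrich, so that Corollary \ref{cUlrichLine} pins down its intersection numbers; it then counts inside the Hilbert scheme $\mathcal H_C$ of degree--$(N+1)$ subschemes of $C$, where the bad locus $\mathcal Z_D$ attached to each Ulrich line bundle $D$ is identified with the linear system $\vert i^*\cO_S(2h+\eta-D)\vert$, whose dimension is bounded by $N-1$ using Clifford's theorem (special case) or Riemann--Roch (non--special case), and the union over the countably many elliptic curves of candidates has dimension $\le N<N+1=\dim\mathcal R$. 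You never need the destabilizer to be Ulrich (only the slope inequality $\cM h\ge\mu(\cE)$, which in particular makes your ``kernel'' case slightly more general), and you transport the count to $\Pic^d(C)$ via the Abel--Jacobi map: the scheme--theoretic equality $B\cap C=Z$ places $\cO_C(Z)$ in the image under restriction $\Pic(S)\to\Pic(C)$ of the classes of $h$--degree $d$, a countable union of translates of an abelian subvariety of dimension $\le q(S)=1$, whereas for very general $Z$ the class $\cO_C(Z)$ is a very general point of $W_d$, of dimension $\min\bigl(d,\pi(\cO_S(h))\bigr)\ge2$. Your route avoids Clifford's theorem altogether---you need no bound on the fibres of Abel--Jacobi, only that its image is not covered by countably many curves---at the price of the geometric identification $B\cap C=Z$; the paper's route gives a more explicit description of the loci in $\mathcal H_C$ that $Z$ must avoid. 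One small blemish: your opening derivation of $d=h^0\big(S,\cO_S(h)\big)=\chi\big(\cO_S(h)\big)$ reads circular as written, since you use $h^0=\chi$ before knowing $h^2\big(S,\cO_S(h)\big)=0$; this is harmless, because $h^2\big(S,\cO_S(h)\big)=h^0\big(S,\cO_S(K_S-h)\big)\le h^0\big(S,\cO_S(K_S)\big)=p_g(S)=0$ is already established in the proof of Theorem \ref{tExistence}, on which your construction rests.
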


Once that the existence of Ulrich bundles of low rank is proved, one could be interested in understanding how large  a family of Ulrich bundles supported on $S$ can actually be. In particular we say that a smooth variety $X\subseteq\p N$ is  {\sl Ulrich--wild} if it supports families of dimension $p$ of pairwise non--isomorphic, indecomposable, Ulrich bundles for arbitrary large $p$. 

The last result proved in this paper concerns the Ulrich--wildness of the surfaces we are dealing with.

\begin{theorem}
\label{tWild}
Let $S$ be a surface with $p_g(S)=0$, $q(S)=1$ and endowed with a very ample non--special line bundle $\cO_S(h)$.
Then $S$ is Ulrich--wild.
\end{theorem}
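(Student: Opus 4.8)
The plan is to reduce the statement to a standard criterion for wild representation type and then to manufacture the required pair of bundles from Theorem \ref{tExistence}. Recall that any extension of Ulrich bundles is again Ulrich, since the Ulrich property is equivalent to the vanishing $H^\ast\big(S,\cF(-h)\big)=H^\ast\big(S,\cF(-2h)\big)=0$, which is preserved in short exact sequences. Consequently, if one can exhibit two Ulrich bundles $\cE_1,\cE_2$ on $S$ which are simple and mutually orthogonal, i.e.
$$
\dim_k\operatorname{Hom}(\cE_i,\cE_i)=1,\qquad \operatorname{Hom}(\cE_1,\cE_2)=\operatorname{Hom}(\cE_2,\cE_1)=0,
$$
and such that $\dim_k\ext^1(\cE_1,\cE_2)\ge3$, then the representations of the wild Kronecker quiver with $\dim_k\ext^1(\cE_1,\cE_2)\ge3$ arrows embed, via iterated extensions of $\cE_1$ and $\cE_2$, as a full exact subcategory of the category of Ulrich bundles; this yields families of arbitrarily large dimension of pairwise non--isomorphic indecomposable Ulrich bundles, so that $S$ is Ulrich--wild (see \cite{F--PL}).

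To build the pair I would apply Theorem \ref{tExistence} twice. Since $q(S)=1$, the abelian variety $\Pic^0(S)$ is an elliptic curve, and the conditions $h^0\big(S,\cO_S(K_S\pm\eta)\big)=h^1\big(S,\cO_S(h\pm\eta)\big)=0$ single out a non--empty open subset whose complement is a proper closed subset; hence admissible classes $\eta$ form an infinite set. I would pick two of them, $\eta_1,\eta_2$, with $2\eta_1\ne2\eta_2$ in $\Pic^0(S)$, which is possible because multiplication by $2$ on an elliptic curve is surjective with finite kernel. Theorem \ref{tExistence} then produces rank $2$ Ulrich bundles $\cE_1,\cE_2$ with $\det\cE_i=\cO_S(3h+K_S+2\eta_i)$ from the sequence \eqref{seqUlrich}. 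In particular $\det\cE_1\ne\det\cE_2$, so $\cE_1\not\cong\cE_2$.

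The Ext bound is then a Riemann--Roch computation, and it holds unconditionally. Setting $\cF=\cE_1^\vee\otimes\cE_2$ and using that every class in $\Pic^0(S)$ is numerically trivial, all the Chern numbers entering $\chi(\cF)$ reduce to those of $3h+K_S$ together with the common value $c_2(\cE_i)=2h^2+2hK_S+h^0\big(S,\cO_S(h)\big)$. Since $\chi(\cO_S)=0$ and $h^0\big(S,\cO_S(h)\big)=\tfrac12\big(h^2-hK_S\big)$ (as $h^1=0$ by hypothesis and $h^2\big(S,\cO_S(h)\big)=0$ because $p_g(S)=0$), one finds $\chi(\cF)=K_S^2-h^2$. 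Therefore
$$
\dim_k\ext^1(\cE_1,\cE_2)=\dim_k\operatorname{Hom}(\cE_1,\cE_2)+\dim_k\ext^2(\cE_1,\cE_2)-\chi(\cF)\ge h^2-K_S^2\ge h^2.
$$
As $K_S^2\le0$ and $h^2\ge3$ (a surface of degree $\le2$ under a very ample embedding is rational, contradicting $q(S)=1$), we conclude $\dim_k\ext^1(\cE_1,\cE_2)\ge3$.

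The main obstacle is the bricks condition. When $\pi(\cO_S(h))\ge2$ it is immediate: by Theorem \ref{tStable} the bundles $\cE_1,\cE_2$ coming from very general sets $Z_1,Z_2$ are stable, and two non--isomorphic stable bundles with the same reduced Hilbert polynomial (as all rank $2$ Ulrich bundles have) are automatically simple and mutually orthogonal. The delicate range is $\pi(\cO_S(h))=1$, where the $\cE_i$ are only semistable. Here a nonzero map $\cE_1\to\cE_2$ must have rank $1$ image --- a rank $2$ image is excluded since it would force a non--zero section of $\cO_S\big(2(\eta_2-\eta_1)\big)$, impossible for a non--trivial numerically trivial class --- and hence would produce a sub--line bundle of $\cE_2$ of slope $\mu(\cE_2)$. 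Ruling this out, together with the simplicity of each $\cE_i$, is the crux: I would establish it by a direct analysis of the defining sequence \eqref{seqUlrich}, checking that for very general $Z$ the off--diagonal groups $\operatorname{Hom}\big(\cO_S(h+K_S+\eta),\cI_{Z\vert S}(2h+\eta)\big)$ carry no endomorphism or cross--homomorphism beyond the scalars. Should this resist a uniform treatment, the fallback is to invoke the single--brick version of the criterion, which needs only one simple Ulrich bundle with $\dim_k\ext^1(\cE_1,\cE_1)\ge2$; the same Riemann--Roch computation gives $\dim_k\ext^1(\cE_1,\cE_1)\ge1+h^2\ge4$, so only the simplicity of a single $\cE_1$ must be secured.
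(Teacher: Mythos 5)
Your treatment of the case $\pi(\cO_S(h))\ge2$ is sound and is essentially the paper's argument: the paper also feeds two non--isomorphic stable rank $2$ Ulrich bundles into Theorem \ref{tFPL}, the only difference being how the second bundle is produced (the paper keeps one $\eta$ and uses the lower bound $4c_2-c_1^2=h^2-K_S^2\ge1$ on the dimension of $\cM_S^{s}(2;c_1,c_2)$ to find $\cG\not\cong\cE$ with the same Chern classes, while you vary $\eta$; both work, since the numerical invariants, hence the Riemann--Roch count $\ext^1\ge h^2-K_S^2\ge3$ and the reduced Hilbert polynomials, are unchanged).

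The genuine gap is the case $\pi(\cO_S(h))=1$, and neither of your two exit strategies closes it. The ``direct analysis for very general $Z$'' cannot work as envisaged: when $\pi(\cO_S(h))=1$ the surface is an elliptic scroll (Theorem A of \cite{A--S}), and it carries a one--dimensional family of Ulrich line bundles $\cO_S(h+p^*\vartheta)$, $\vartheta\in\Pic^0(E)\setminus\{\ \cO_E\ \}$. For each such $D=h+p^*\vartheta$ the locus $\mathcal Z_D$ of bad subschemes $Z$ (in the notation of the proof of Theorem \ref{tStable}) has dimension $N$, because $i^*\cO_S(2h+\eta-D)$ has degree $N+1$ on the genus $1$ curve $C$ and is therefore non--special; the union over the positive--dimensional family of such $D$ then sweeps out the whole Hilbert scheme of the $Z$'s, so no choice of $Z$, however general, avoids all of them. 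This is exactly why the paper's remark after Theorem \ref{tStable} points out that for $\pi(\cO_S(h))=1$ the sub--line bundle $\cO_S(h+\eta)$ is Ulrich and the stability argument yields no contradiction: stability, and with it your route to simplicity and orthogonality, breaks down in this range. The fallback is also unavailable: Theorem \ref{tFPL} as quoted requires two simple Ulrich bundles with $h^0\big(X,\cA\otimes\cB^\vee\big)=h^0\big(X,\cB\otimes\cA^\vee\big)=0$ (taking $\cA=\cB$ is impossible, as $h^0\big(X,\cA\otimes\cA^\vee\big)\ge1$), and even granting some single--brick criterion from elsewhere, it would still require the simplicity of $\cE_1$, which is precisely what is not established. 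The paper sidesteps rank $2$ entirely here: it uses the Ulrich line bundles $\mathcal L=\cO_S(h+p^*\vartheta)$ (Assertion 2) of Proposition 5 in \cite{Bea3}) and $\cM=\cO_S(2h+K_S-p^*\vartheta)$ (Ulrich by Corollary \ref{cUlrichLine}), which are automatically simple, Hom--orthogonal in both directions because they are non--isomorphic of the same $h$--degree, and satisfy $h^1\big(S,\mathcal L\otimes\cM^\vee\big)\ge 2\deg(\frak b)-e\ge5$ by Riemann--Roch, so that Theorem \ref{tFPL} applies. An argument of this kind is what your proposal is missing for $\pi(\cO_S(h))=1$.
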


In Section \ref{sGeneral} we list some general results on Ulrich bundles on polarised surfaces. In Section \ref{sExistence} we prove Theorem \ref{tExistence}. In Section \ref{sStability} we first recall some easy facts about the stability of Ulrich bundles, giving finally the proof of Theorem \ref{tStable}. In Section \ref{sWild} we prove Theorem \ref{tWild}.

Finally, the author would like to thank the referee for her/his comments which have allowed us to improve the whole exposition.

\section{General results}
\label{sGeneral}
In general, an Ulrich bundle $\cF$ on $X\subseteq\p N$ collects many interesting properties (see Section 2 of \cite{E--S--W}). The following ones are particularly important.
\begin{itemize}
\item $\cF$ is globally generated and its direct summands are Ulrich as well. 
\item $\cF$ is {\sl initialized}, i.e. $h^0\big(X,\cF(-h)\big)=0$ and $h^0\big(X,\cF\big)\ne0$.
\item $\cF$ is {\sl aCM}, i.e. $h^i\big(X,\cF(th)\big)=0$ for each $i=1,\dots,\dim(X)-1$ and $t\in \bZ$.
%\item $\cF$ is {\sl maximally generated}, i.e. $h^0\big(X,\cF(-h)\big)$ attains the maximal possible value for an aCM bundle on $X$, i.e. $\rk(\cF)h^n$.
\end{itemize}

Let $S$ be a surface. The Serre duality for $\cF$ is
$$
h^i\big(S,\cF\big)=h^{2-i}\big(S,\cF^\vee(K_S)\big),\qquad i=0,1,2,
$$
and the Riemann--Roch theorem is
\begin{equation}
\label{RRGeneral}
\begin{aligned}
h^0\big(S,\cF\big)&+h^{2}\big(S,\cF\big)=h^{1}\big(S,\cF\big)+\\
&+\rk(\cF)\chi(\cO_S)+\frac{c_1(\cF)(c_1(\cF)-K_S)}2-c_2(\cF).
\end{aligned}
\end{equation}

\begin{proposition}
\label{pUlrich}
Let $S$ be a surface endowed with a very ample line bundle  $\cO_S(h)$.

If $\cE$ is a vector bundle on $S$, then the following assertions are equivalent:
\begin{enumerate}
\item $\cE$ is an Ulrich bundle with respect to $\cO_S(h)$;
\item $\cE^\vee(3h+K_S)$ is an Ulrich bundle with respect to $\cO_S(h)$;
\item $\cE$ is an aCM bundle and 
\begin{equation}
\label{eqUlrich}
\begin{gathered}
c_1(\cE)h=\rk(\cE)\frac{3h^2+hK_S}2,\\ 
c_2(\cE)=\frac{c_1(\cE)^2-c_1(\cE)K_S}2-\rk(\cE)(h^2-\chi(\cO_S));
\end{gathered}
\end{equation}
\item $h^0\big(S,\cE(-h)\big)=h^0\big(S,\cE^\vee(2h+K_S)\big)=0$ and Equalities \eqref{eqUlrich} hold.
\end{enumerate}
\end{proposition}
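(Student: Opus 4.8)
The plan is to treat condition (4) as a hub: I would prove $(1)\Leftrightarrow(2)$ by Serre duality, and then $(1)\Leftrightarrow(4)$ together with $(3)\Leftrightarrow(4)$ by elementary cohomological bookkeeping, which together yield all four equivalences. Since $\dim(S)=2$, unwinding the definition shows that $\cE$ is Ulrich precisely when $h^0\big(S,\cE(-h)\big)=h^1\big(S,\cE(-h)\big)=h^1\big(S,\cE(-2h)\big)=h^2\big(S,\cE(-2h)\big)=0$, and this is the form I would use throughout.

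Two preliminary observations will drive the numerical parts. First, fixing a nonzero $s\in H^0\big(S,\cO_S(h)\big)$ (which exists since $\cO_S(h)$ is very ample) and multiplying by $s$ produces injections of sheaves $\cE(-2h)\hookrightarrow\cE(-h)$ and $\cE^\vee(h+K_S)\hookrightarrow\cE^\vee(2h+K_S)$, because $\cE$ is locally free; hence $h^0\big(S,\cE(-2h)\big)\le h^0\big(S,\cE(-h)\big)$ and, via Serre duality, $h^2\big(S,\cE(-h)\big)\le h^2\big(S,\cE(-2h)\big)$. Second, substituting the Chern classes of $\cE(-h)$ and $\cE(-2h)$ into the Riemann--Roch formula \eqref{RRGeneral} shows that the two equalities in \eqref{eqUlrich} are equivalent to the single pair of conditions $\chi\big(S,\cE(-h)\big)=\chi\big(S,\cE(-2h)\big)=0$. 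This is the only genuinely computational step, requiring the standard twist formulas for $c_1$ and $c_2$, and I expect it to be the main bookkeeping obstacle.

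For $(1)\Leftrightarrow(2)$ I would set $\cG:=\cE^\vee(3h+K_S)$ and compute the four defining cohomology groups of $\cG$ by Serre duality: one finds $h^0\big(S,\cG(-h)\big)=h^2\big(S,\cE(-2h)\big)$, $h^1\big(S,\cG(-h)\big)=h^1\big(S,\cE(-2h)\big)$, $h^1\big(S,\cG(-2h)\big)=h^1\big(S,\cE(-h)\big)$ and $h^2\big(S,\cG(-2h)\big)=h^0\big(S,\cE(-h)\big)$, so the Ulrich vanishings for $\cG$ coincide with those for $\cE$. Since the assignment $\cE\mapsto\cE^\vee(3h+K_S)$ is an involution, this yields the equivalence.

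For $(1)\Leftrightarrow(4)$, observe that (4) is exactly $h^0\big(S,\cE(-h)\big)=h^2\big(S,\cE(-2h)\big)=0$ together with \eqref{eqUlrich}, using Serre duality to identify $h^0\big(S,\cE^\vee(2h+K_S)\big)=h^2\big(S,\cE(-2h)\big)$. The implication $(1)\Rightarrow(4)$ is immediate from the definition and the Riemann--Roch reduction above. Conversely, assuming (4), the two injections kill $h^0\big(S,\cE(-2h)\big)$ and $h^2\big(S,\cE(-h)\big)$, and then $\chi\big(S,\cE(-h)\big)=\chi\big(S,\cE(-2h)\big)=0$ forces $h^1\big(S,\cE(-h)\big)=h^1\big(S,\cE(-2h)\big)=0$, recovering all four Ulrich vanishings. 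Finally, for $(3)\Leftrightarrow(4)$: if $\cE$ is aCM then $h^1\big(S,\cE(-h)\big)=h^1\big(S,\cE(-2h)\big)=0$, so the vanishing of the two Euler characteristics forces the remaining nonnegative groups $h^0\big(S,\cE(-h)\big)$ and $h^2\big(S,\cE(-2h)\big)$ to vanish, giving (4); for the reverse I would combine $(4)\Rightarrow(1)$ with the fact, recorded among the properties listed above, that every Ulrich bundle is aCM. This last appeal to the aCM property (which is proved via a finite linear projection $S\to\p2$) is the one point at which I rely on an external input rather than the formal manipulations.
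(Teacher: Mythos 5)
Your proof is correct. Note that this paper does not actually prove Proposition \ref{pUlrich}: its ``proof'' is only a citation of Proposition 2.1 of \cite{Cs}, so there is no internal argument to compare yours against; what you have written is the standard Serre-duality/Riemann--Roch bookkeeping that the citation stands for, and every step checks. In particular, the one computation you flagged as the main obstacle but did not carry out does work: Riemann--Roch gives $\chi\big(S,\cE(-th)\big)=\chi\big(S,\cE\big)-tc_1(\cE)h+\rk(\cE)\frac{t^2h^2+thK_S}{2}$, so $\chi\big(S,\cE(-h)\big)-\chi\big(S,\cE(-2h)\big)=c_1(\cE)h-\rk(\cE)\frac{3h^2+hK_S}{2}$; hence the simultaneous vanishing of the two Euler characteristics is equivalent to the first equality of \eqref{eqUlrich} together with $\chi\big(S,\cE\big)=\rk(\cE)h^2$, and the latter, expanded via \eqref{RRGeneral}, is precisely the second equality of \eqref{eqUlrich}. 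The remaining ingredients---the Serre-duality dictionary identifying the four Ulrich vanishings of $\cE^\vee(3h+K_S)$ with those of $\cE$, the multiplication-by-section injections forcing $h^0\big(S,\cE(-2h)\big)=h^2\big(S,\cE(-h)\big)=0$, and the appeal, for $(4)\Rightarrow(3)$, to the fact (quoted in Section \ref{sGeneral} from \cite{E--S--W}) that Ulrich bundles are aCM---are all legitimate and correctly applied.
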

\begin{proof}
See \cite{Cs}, Proposition 2.1.
\end{proof}

The following corollaries are immediate consequences of the above characterization.

\begin{corollary}
\label{cUlrichLine}
Let $S$ be a surface endowed with a very ample line bundle  $\cO_S(h)$.

If $\cO_S(D)$ is a line bundle on $S$, then the following assertions are equivalent:
\begin{enumerate}
\item $\cO_S(D)$ is an Ulrich bundle with respect to $\cO_S(h)$;
\item $\cO_S(3h+K_S-D)$ is an Ulrich bundle with respect to $\cO_S(h)$;
\item $\cO_S(D)$ is an aCM bundle and 
\begin{equation}
\label{eqLineBundle}
D^2=2(h^2-\chi(\cO_S))+DK_S,\qquad Dh=\frac12(3h^2+hK_S);
\end{equation}
\item $h^0\big(S,\cO_S(D-h)\big)=h^0\big(S,\cO_S(2h+K_S-D)\big)=0$ and Equalities \eqref{eqLineBundle} hold.
\end{enumerate}
\end{corollary}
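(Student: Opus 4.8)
The plan is to apply Proposition \ref{pUlrich} directly to the rank--$1$ bundle $\cE:=\cO_S(D)$ and to translate each of its four equivalent assertions into the corresponding assertion of the corollary. The only input needed is the specialisation of the Chern--class data of a line bundle, namely $\rk(\cE)=1$, $c_1(\cE)=D$ and $c_2(\cE)=0$.

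First I would record the relevant twists. Since $\cE^\vee=\cO_S(-D)$, we have $\cE^\vee(3h+K_S)=\cO_S(3h+K_S-D)$, as well as $\cE(-h)=\cO_S(D-h)$ and $\cE^\vee(2h+K_S)=\cO_S(2h+K_S-D)$. Hence assertions (1) and (2) of Proposition \ref{pUlrich} coincide verbatim with assertions (1) and (2) of the corollary, and the two cohomological vanishings in assertion (4) of the proposition become exactly those appearing in assertion (4) of the corollary. The aCM condition is likewise preserved, being stated identically for $\cE$ and for $\cO_S(D)$.

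Next I would substitute the Chern classes into Equalities \eqref{eqUlrich}. The first equality $c_1(\cE)h=\rk(\cE)(3h^2+hK_S)/2$ becomes $Dh=(3h^2+hK_S)/2$, which is the second equality in \eqref{eqLineBundle}. The second equality $c_2(\cE)=(c_1(\cE)^2-c_1(\cE)K_S)/2-\rk(\cE)(h^2-\chi(\cO_S))$ becomes, using $c_2(\cE)=0$, the relation $0=(D^2-DK_S)/2-(h^2-\chi(\cO_S))$, that is $D^2=2(h^2-\chi(\cO_S))+DK_S$, which is the first equality in \eqref{eqLineBundle}. Thus Equalities \eqref{eqUlrich} for $\cE=\cO_S(D)$ are precisely Equalities \eqref{eqLineBundle}, so assertions (3) and (4) of the proposition become assertions (3) and (4) of the corollary.

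With these identifications in place, the equivalence of (1)--(4) in the corollary follows immediately from the equivalence of the corresponding statements in Proposition \ref{pUlrich}. I do not expect any genuine obstacle: the entire content is the bookkeeping of how the rank, first Chern class and vanishing second Chern class of a line bundle feed into the general criterion of Proposition \ref{pUlrich}.
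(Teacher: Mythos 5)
Your proposal is correct and is exactly the argument the paper intends: the paper declares the corollary an immediate consequence of Proposition~\ref{pUlrich} (deferring details to \cite{Cs}, Corollary~2.2), and your specialisation $\rk(\cE)=1$, $c_1(\cE)=D$, $c_2(\cE)=0$, together with the identifications $\cE^\vee(3h+K_S)=\cO_S(3h+K_S-D)$, $\cE(-h)=\cO_S(D-h)$ and $\cE^\vee(2h+K_S)=\cO_S(2h+K_S-D)$, is precisely that bookkeeping. The translation of Equalities~\eqref{eqUlrich} into Equalities~\eqref{eqLineBundle} is carried out correctly, so nothing is missing.
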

\begin{proof}
See \cite{Cs}, Corollary 2.2.
\end{proof}

\section{Existence of rank $2$ Ulrich bundles}
\label{sExistence}

We start this section by recalling that if $S$ is any surface, then the connected component $\Pic^0(S)$ of the identity inside $\Pic(S)$ is an abelian variety of dimension $q(S)$ called {\sl Picard variety of $S$}. The quotient is a finitely generated abelian group called {\sl  N\'eron--Severi group of $S$}.

Now, let $S$ be a surface with $p_g(S)=0$ and $q(S)=1$. Then $\Pic^0(S)$ is an elliptic curve: in particular $\Pic^0(S)$ contains three pairwise distinct non--trivial divisors of order $2$.

In order to prove Theorem \ref{tExistence} we will make use of the Hartshorne--Serre correspondence on surfaces. We recall that a locally complete intersection subscheme $Z$ of dimension zero on a surface $S$ is Cayley--Bacharach (CB for short) with respect to a line bundle  $\cO_S(A)$ if, for each $Z'\subseteq Z$ of degree $\deg(Z)-1$, the natural morphism $H^0\big(S,\cI_{Z\vert S}(A)\big)\to H^0\big(S,\cI_{Z'\vert S}(A)\big)$ is an isomorphism.

\begin{theorem}
\label{tCB}
Let $S$ be a surface and $Z\subseteq S$ a locally complete intersection subscheme of dimension $0$. 
 
Then there exists a vector bundle $\cF$ of rank $2$ on $S$ fitting into an exact sequence of the form
\begin{equation}
\label{seqCB}
0\longrightarrow \cO_S\longrightarrow \cF\longrightarrow \cI_{Z\vert S}(A)\longrightarrow0,
\end{equation}
if and only if $Z$ is CB with respect to $\cO_S(A+K_S)$. 
\end{theorem}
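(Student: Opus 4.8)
The plan is to realize \eqref{seqCB} through the Serre construction, regarding extensions of $\cI_{Z\vert S}(A)$ by $\cO_S$ as elements of $\ext^1\big(\cI_{Z\vert S}(A),\cO_S\big)$ and translating the local freeness of the middle term into a non--vanishing condition on an associated section. First I would observe that, away from the finite set $\mathrm{supp}(Z)$, the sheaf $\cI_{Z\vert S}(A)$ is invertible, so any extension $\cF$ is automatically locally free there; the only issue is at the points of $Z$. The invariant governing this is the sheaf $\Ext^1\big(\cI_{Z\vert S}(A),\cO_S\big)$, and I would prove (the standard Serre criterion) that an extension class $\xi$ yields a locally free $\cF$ if and only if its image in $H^0\big(S,\Ext^1(\cI_{Z\vert S}(A),\cO_S)\big)$ generates this sheaf at every point of $Z$.

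Set $L:=A+K_S$. Since $Z$ is a locally complete intersection of codimension $2$ it is Gorenstein, so its dualizing sheaf $\omega_Z$ is an invertible $\cO_Z$--module and $\Ext^q(\cO_Z,\cO_S)=0$ for $q\ne2$, while $\Ext^2(\cO_Z,\cO_S)\cong\omega_Z\otimes\cO_S(-K_S)$. Dualizing the ideal--sheaf sequence of $Z$ then gives $\Ext^1\big(\cI_{Z\vert S}(A),\cO_S\big)\cong\omega_Z\otimes\cO_S(-L)\vert_Z$, again an invertible $\cO_Z$--module. Feeding this into the local--to--global spectral sequence for $\ext^\bullet$ produces the exact sequence
\[
H^1\big(S,\cO_S(-A)\big)\to\ext^1\big(\cI_{Z\vert S}(A),\cO_S\big)\xrightarrow{\alpha}H^0\big(Z,\omega_Z\otimes\cO_S(-L)\vert_Z\big)\xrightarrow{\beta}H^2\big(S,\cO_S(-A)\big).
\]
Serre duality on $S$ identifies $\ext^1\big(\cI_{Z\vert S}(A),\cO_S\big)$ with $H^1\big(S,\cI_{Z\vert S}(L)\big)^\vee$; comparing the resulting dual of the cohomology sequence of $0\to\cI_{Z\vert S}(L)\to\cO_S(L)\to\cO_Z(L)\to0$ with the displayed sequence, and using duality on the Artinian scheme $Z$ to write the middle term as $H^0\big(Z,\cO_S(L)\vert_Z\big)^\vee$, I would identify $\beta$ with the transpose of the restriction $\rho\colon H^0\big(S,\cO_S(L)\big)\to H^0\big(Z,\cO_S(L)\vert_Z\big)$. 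Hence $\operatorname{im}\alpha=\ker\beta=(\operatorname{im}\rho)^{\perp}$, and the problem reduces to finding inside $(\operatorname{im}\rho)^{\perp}$ a functional whose associated section of $\omega_Z\otimes\cO_S(-L)\vert_Z$ is nowhere vanishing on $Z$.

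The crux is the passage between this nowhere--vanishing condition and the Cayley--Bacharach property, which I would settle pointwise via the socle. Decomposing $Z$ into its local pieces at the points $z\in\mathrm{supp}(Z)$, each local ring is Gorenstein Artinian with one--dimensional socle $\Soc$, so at each $z$ there is a unique colength--one subscheme $Z'\subseteq Z$, and the kernel of $H^0\big(Z,\cO_S(L)\vert_Z\big)\to H^0\big(Z',\cO_S(L)\vert_{Z'}\big)$ is the socle line $k\sigma_z$. Unwinding the definitions, $Z$ is CB with respect to $\cO_S(L)$ exactly when $\sigma_z\notin\operatorname{im}\rho$ for every $z$, whereas a functional $\phi\in(\operatorname{im}\rho)^\perp$ corresponds to a nowhere--vanishing section precisely when $\phi(\sigma_z)\ne0$ for every $z$ (a generator of $\mathrm{Hom}_k(\cO_{Z,z},k)\cong\omega_{Z,z}$ being exactly a functional not killing the socle). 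For the implication from CB, each condition $\phi(\sigma_z)=0$ cuts out a proper subspace of $(\operatorname{im}\rho)^\perp$ — this is where $\sigma_z\notin\operatorname{im}\rho$ is used — and since $\mathrm{supp}(Z)$ is finite and $k$ is infinite, a general $\phi$ avoids all of them; lifting $\phi$ along $\alpha$ gives a class $\xi$ with $\cF$ locally free. Conversely, a locally free $\cF$ produces a section of $\Ext^1$ generating everywhere, hence a $\phi\in(\operatorname{im}\rho)^\perp$ with $\phi(\sigma_z)\ne0$ for all $z$, forcing $\sigma_z\notin\operatorname{im}\rho$ and thus the CB property. The main obstacle is precisely this local Gorenstein/socle analysis together with the compatibility of $\beta$ with $\rho$ under the two dualities; the surrounding homological algebra and the final general--position argument are routine.
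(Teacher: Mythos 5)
Your argument is correct: reducing via the local-to-global Ext sequence and the two dualities to finding a functional in $(\operatorname{im}\rho)^{\perp}$ that kills no socle line, and then translating both local freeness and the CB property through the Gorenstein/socle analysis, is precisely the standard proof of the Hartshorne--Serre correspondence on surfaces. The paper itself offers no argument for Theorem \ref{tCB} --- it simply cites Theorem 5.1.1 of \cite{H--L} --- and your proof is essentially the argument given in that reference (including the key local criterion that an extension class yields a locally free sheaf if and only if its image generates $\Ext^1\big(\cI_{Z\vert S}(A),\cO_S\big)$ at every point of $Z$), so the two approaches coincide.
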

\begin{proof}
See Theorem 5.1.1 in \cite{H--L}.
\end{proof}

We now prove Theorem \ref{tExistence} stated in the introduction. As we already noticed therein, its proof for $hK_S=0$ coincides with the one of Proposition 6 in \cite{Bea3} because in this case the vanishing $h^1\big(S,\cO_S(h\pm\eta)\big)=0$ follows immediately from the Kodaira vanishing theorem as we will show below in Corollary \ref{cBielliptic}. 

\medbreak
\noindent{\it Proof of Theorem \ref{tExistence}.}
Recall that by hypothesis $p_g(S)=h^1\big(S,\cO_S(h)\big)=0$ and $q(S)=1$. It follows that  $\chi(\cO_S)=0$ and
\begin{equation*}
\label{eqh^2}
h^2\big(S,\cO_S(h)\big)=h^0\big(S,\cO_S(K_S-h)\big)\le h^0\big(S,\cO_S(K_S)\big)=0,
\end{equation*}
thus $S\subseteq\p N$, where 
\begin{equation}
\label{eqDimension}
N:=h^0\big(S,\cO_S(h)\big)-1=\frac{h^2-hK_S}2-1\ge4,
\end{equation}
because $q(S)=0$ for each surface $S\subseteq\p3$.

Let $C:=S\cap H\in\vert h\vert$ be a general hyperplane section and let $i\colon C\to S$ be the inclusion morphism. 
The curve $C$ is non--degenerate in $\p{N-1}\cong H\subseteq\p N$. Indeed the exact sequence
$$
0\longrightarrow \cI_{S\vert\p N}(1)\longrightarrow \cO_{\p N}(1)\longrightarrow \cO_{S}(h)\longrightarrow 0
$$
implies $h^0\big(\p N,\cI_{S\vert\p N}(1)\big)=h^1\big(\p N,\cI_{S\vert\p N}(1)\big)=0$. Thus, the exact sequence
$$
0\longrightarrow \cI_{S\vert\p N}(1)\longrightarrow  \cI_{C\vert\p N}(1)\longrightarrow  \cI_{C\vert S}(h)\longrightarrow 0
$$
implies $h^0\big(\p N,\cI_{C\vert\p N}(1)\big)=1$, because $\cI_{C\vert S}(h)\cong\cO_S$. Finally the exact sequence 
$$
0\longrightarrow \cI_{H\vert\p N}(1)\longrightarrow  \cI_{C\vert\p N}(1)\longrightarrow  \cI_{C\vert H}(1)\longrightarrow 0
$$
and the isomorphism $\cI_{H\vert\p N}(1)\cong\cO_{\p N}$ yields $h^0\big(C,\cI_{C\vert H}(1)\big)=0$. 

It follows the existence of a reduced subscheme $Z\subseteq C\subseteq S$ of degree $N+1$ whose points are in general position inside $H\cong\p{N-1}$. Thus $Z$ is CB with respect to $\cO_S(h)$, hence there exists Sequence \eqref{seqCB} with $\cO_S(A)\cong\cO_S(h-K_S)$, thanks to Theorem \ref{tCB}.

Let $\cO_S(\eta)\in\Pic^0(S)\setminus\{\ \cO_S\ \}$ be such that $h^0\big(S,\cO_S(K_S\pm\eta)\big)=h^1\big(S,\cO_S(h\pm\eta)\big)=0$ and set $\cE:=\cF(h+K_S+\eta)$. The bundle $\cE$ fits into Sequence \eqref{seqUlrich} and satisfies Equalities \eqref{eqUlrich}. If we show that $h^0\big(S,\cE(-h)\big)=h^0\big(S,\cE^\vee(2h+K_S)\big)=0$, then we conclude that $\cE$ is Ulrich thanks to Proposition \ref{pUlrich} above. Notice that the second vanishing is equivalent to $h^0\big(S,\cE(-h-2\eta)\big)=0$ because $c_1(\cE)=3h+K_S+2\eta$.

The vanishing $h^0\big(S,\cO_S(K_S\pm\eta)\big)=0$ implies
\begin{equation*}
\begin{gathered}
\label{Bound}
h^0\big(S,\cE(-h)\big)\le h^0\big(S, \cI_{Z\vert S}(h+\eta)\big),\qquad
h^0\big(S,\cE(-h-2\eta)\big)\le h^0\big(S, \cI_{Z\vert S}(h-\eta)\big).
\end{gathered}
\end{equation*}
The exact sequence 
\begin{equation}
\label{seqIdeals}
0\longrightarrow \cI_{C\vert S}\longrightarrow  \cI_{Z\vert S}\longrightarrow  \cI_{Z\vert C}\longrightarrow 0
\end{equation}
and the isomorphisms $\cI_{C\vert S}\cong\cO_S(-h)$ and $\cI_{Z\vert C}\cong\cO_C(-Z)$ imply
$$
h^0\big(S, \cI_{Z\vert S}(h\pm \eta)\big)\le h^0\big(C, \cO_C(-Z)\otimes\cO_S(h\pm\eta)\big)
$$
because $h^0\big(S, \cO_S(\pm\eta)\big)=0$. Thanks to the general choice of the points in $Z$,
the Riemann--Roch theorem on $C$ and the adjunction formula $\cO_C(K_C)\cong i^*\cO_S(h+K_S)$ on $S$ give
\begin{align*}
h^0\big(&C, \cO_C(-Z)\otimes\cO_S(h\pm\eta)\big)=h^0\big(C,  i^*\cO_S(h\pm\eta)\big)-\deg(Z)=\\
&=h^2+1-\pi(\cO_S(h))-\deg(Z)+h^1\big(C,  i^*\cO_S(h\pm\eta)\big)=h^0\big(C,  i^*\cO_S(K_S\mp\eta)\big).
\end{align*}
The exact sequence
\begin{equation}
\label{seqHyperplane}
0\longrightarrow \cO_S(-h)\longrightarrow  \cO_S\longrightarrow  \cO_C\longrightarrow 0
\end{equation}
implies the existence of the  exact sequence
\begin{align*}
H^0\big(S,\cO_S(K_S\mp\eta)\big)&\longrightarrow  H^0\big(C, i^*\cO_S(K_S\mp\eta)\big)\longrightarrow\\
&\longrightarrow H^1\big(S,\cO_S(K_S-h\mp\eta)\big)\cong H^1\big(S,\cO_S(h\pm\eta)\big).
\end{align*}
Thus the hypothesis on $\cO_S(K_S\pm\eta)$ and $\cO_S(h\pm\eta)$ forces $h^0\big(C, i^*\cO_S(K_S\mp\eta)\big)=0$.
\qed
\medbreak

It is natural to ask when the vanishings $h^1\big(S,\cO_S(K_S\pm\eta)\big)=h^1\big(S,\cO_S(h\pm\eta)\big)=0$ actually occur. We list below some related result.

\begin{corollary}
Let $S$ be a surface with $p_g(S)=0$, $q(S)=1$ and endowed with a very ample non--special line bundle $\cO_S(h)$.

Then $S$ supports Ulrich bundles of rank $r\le2$.
\end{corollary}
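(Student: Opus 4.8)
The plan is to reduce everything to Theorem \ref{tExistence}: the only thing that theorem leaves unverified, as an input, is the existence of a line bundle $\cO_S(\eta)\in\Pic^0(S)\setminus\{\ \cO_S\ \}$ for which
$$
h^0\big(S,\cO_S(K_S\pm\eta)\big)=h^1\big(S,\cO_S(h\pm\eta)\big)=0.
$$
So the whole task is to produce one such \emph{non--trivial} $\eta$; once it is in hand, the rank $2$ bundle $\cE$ supplied by Theorem \ref{tExistence} is Ulrich, and the assertion follows.

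First I would treat the four quantities
$$
h^0\big(S,\cO_S(K_S+\eta)\big),\quad h^0\big(S,\cO_S(K_S-\eta)\big),\quad h^1\big(S,\cO_S(h+\eta)\big),\quad h^1\big(S,\cO_S(h-\eta)\big)
$$
as functions of the point $\cO_S(\eta)\in\Pic^0(S)$. Twisting $\cO_S(K_S)$ and $\cO_S(h)$ by (the pullback of) a Poincar\'e line bundle on $S\times\Pic^0(S)$ realizes each of them as the fibrewise $h^i$ of a flat family of line bundles, so the semicontinuity theorem makes all four upper semicontinuous on $\Pic^0(S)$. Next I would evaluate them at the identity element $\cO_S\in\Pic^0(S)$: the hypothesis $p_g(S)=0$ gives $h^0\big(S,\cO_S(K_S)\big)=0$, and the non--speciality of $\cO_S(h)$ gives $h^1\big(S,\cO_S(h)\big)=0$. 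Hence all four functions vanish at the identity, so each of them vanishes on a non--empty Zariski--open subset of $\Pic^0(S)$.

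Finally I would intersect the four open loci. Because $q(S)=1$, the Picard variety $\Pic^0(S)$ is an elliptic curve, in particular an irreducible variety of dimension $1$, so every non--empty open subset of it is the complement of a finite set of points. The intersection of the four open loci is thus non--empty and cofinite, and in particular contains some $\cO_S(\eta)\ne\cO_S$. Applying Theorem \ref{tExistence} to this $\eta$ produces a rank $2$ Ulrich bundle on $S$, whose rank is of course $\le2$.

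The step I regard as conceptually decisive — rather than computationally hard — is the very last one. Each of the four vanishings is cheap at the identity, but the identity $\cO_S$ is exactly the point excluded by Theorem \ref{tExistence}, so knowing the conditions there is useless by itself. What rescues the argument is the positive dimension of $\Pic^0(S)$: on a curve a non--empty open set is automatically cofinite, and therefore meets $\Pic^0(S)\setminus\{\ \cO_S\ \}$. This is precisely where the hypothesis $q(S)=1$ is used; were $\Pic^0(S)$ zero--dimensional the good locus could reduce to the forbidden point and the construction would fail.
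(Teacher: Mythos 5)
Your proposal is correct and follows essentially the same route as the paper: both reduce the statement to producing a non-trivial $\cO_S(\eta)\in\Pic^0(S)$ with $h^0\big(S,\cO_S(K_S\pm\eta)\big)=h^1\big(S,\cO_S(h\pm\eta)\big)=0$, both obtain semicontinuity of these quantities via the Poincar\'e bundle on $S\times\Pic^0(S)$, and both conclude by evaluating at the identity and using that $\Pic^0(S)$ is an irreducible curve so the good locus is cofinite and meets $\Pic^0(S)\setminus\{\ \cO_S\ \}$. The only cosmetic difference is that the paper phrases the argument with closed bad loci $\mathcal V^1_h(0)\cup\mathcal V^0_{K_S}(0)$ rather than intersecting four open good loci, which is the same thing.
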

\begin{proof}
Since each direct summand of an Ulrich bundle is Ulrich as well, it follows from Theorem \ref{tExistence} that it suffices to prove the existence of $\cO_S(\eta)\in\Pic^0(S)\setminus\{\ \cO_S\ \}$ such that $h^0\big(S,\cO_S(K_S\pm\eta)\big)=h^1\big(S,\cO_S(h\pm\eta)\big)=0$.

Let $\cP$ be the Poincar\'e line bundle on $S\times\Pic(S)$. Recall that (e.g. see \cite{Mu}, Lecture 19), if $p\colon S\times\Pic(S)\to\Pic(S)$ is the projection on the second factor and $\mathcal L\in \Pic(S)$, then the restriction of $\mathcal P$ to the fibre $p^{-1}(\mathcal L)\cong S$ is isomorphic to the line bundle $\mathcal L$. The line bundle $\mathcal P$ is thus flat on $\Pic(S)$.

Let $\mathcal P_0$ be the restriction of $\mathcal P$ to $S\times\Pic^0(S)$, $A\subseteq S$ a divisor, $s\colon S\times\Pic(S)\to S$ the projection on the first factor. The line bundle $\mathcal P_0\otimes s^*\cO_S(A)$ is flat over $\Pic^0(S)$ and parameterizes  the line bundles on $S$ algebraically equivalent to $\cO_S(A)$. Thus the semicontinuity theorem (e.g. see Theorem III.12.8 of \cite{Ha2}) applied to the sheaf $\mathcal P_0\otimes s^*\cO_S(A)$ and the map $p_0\colon S\times\Pic^0(S)\to\Pic^0(S)$ imply that for each $i=0,1,2$ and $c\in\bZ$ the sets
\begin{gather*}
\mathcal V^i_A(c):=\{\ \eta\in\Pic^0(S)\ \vert\ h^i\big(S,\cO_S(A\pm\eta)\big)>c\ \},
\end{gather*}
are closed inside $\Pic^0(S)$. In particular $\mathcal V:=\mathcal V^1_h(0)\cup \mathcal V^0_{K_S}(0)$ is closed.

By definition $\cO_S\in\Pic^0(S)\setminus\mathcal V\ne\emptyset$. Thus for each general $\cO_S(\eta)\in\Pic^0(S)$, the hypothesis $h^0\big(S,\cO_S(K_S\pm\eta)\big)=h^1\big(S,\cO_S(h\pm\eta)\big)=0$ is satisfied and the statement is then completely proved.
\end{proof}

Notice that the above result guarantees the existence of an Ulrich bundle $\cE$ with $c_1(\cE)=3h+K_S+2\eta$ fitting into Sequence \eqref{seqUlrich}. Such bundle is special if and only if  $\cO_S(\eta)$ has order $2$. It is not clear if such a choice can be done in general. Anyhow in some particular cases we can easily prove an existence result also for special Ulrich bundles: we start from Beauville's result for {\sl bielliptic surfaces}, i.e. minimal surfaces $S$ with $p_g(S)=0$, $q(S)=1$ and $\kappa(S)=0$ (see Proposition 6 of \cite{Bea3}).

\begin{corollary}
\label{cBielliptic}
Let $S$ be a bielliptic surface endowed with a very ample line bundle $\cO_S(h)$.

Then $\cO_S(h)$ is non--special and $S$ supports special Ulrich bundles of rank $2$.
\end{corollary}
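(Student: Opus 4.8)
=== MY PROOF PROPOSAL ===

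The plan is to verify the two assertions of Corollary \ref{cBielliptic} in sequence, using the special geometry of bielliptic surfaces to reduce everything to the hypotheses of Theorem \ref{tExistence}. Recall that a bielliptic surface is minimal with $p_g(S)=0$, $q(S)=1$, $\kappa(S)=0$, and crucially $K_S$ is torsion (indeed numerically trivial), so that $K_S^2=0$ and $hK_S=0$ for any polarisation $\cO_S(h)$.

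\medbreak

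\emph{Step 1: non--speciality of $\cO_S(h)$.} First I would show $h^1\big(S,\cO_S(h)\big)=0$. Since $\cO_S(h)$ is ample and $K_S$ is numerically trivial, the divisor $h-K_S$ is ample, so the Kodaira vanishing theorem gives $h^1\big(S,\cO_S(K_S+(h-K_S))\big)=h^1\big(S,\cO_S(h)\big)=0$. Hence $\cO_S(h)$ is non--special, which is the first claim.

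\medbreak

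\emph{Step 2: producing a suitable $2$--torsion $\eta$.} As recalled just above the corollary, $\Pic^0(S)$ is an elliptic curve and therefore contains three pairwise distinct non--trivial $2$--torsion elements. I would fix $\cO_S(\eta)\in\Pic^0(S)\setminus\{\cO_S\}$ with $2\eta\equiv0$; since $\eta$ is $2$--torsion we have $\cO_S(\eta)\cong\cO_S(-\eta)$, so the four vanishings $h^0\big(S,\cO_S(K_S\pm\eta)\big)=h^1\big(S,\cO_S(h\pm\eta)\big)=0$ collapse to just two conditions, namely $h^0\big(S,\cO_S(K_S+\eta)\big)=0$ and $h^1\big(S,\cO_S(h+\eta)\big)=0$.

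\medbreak

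\emph{Step 3: checking the two vanishings.} For the cohomology of $h+\eta$, note that $h+\eta-K_S$ is the sum of an ample divisor $h$ and a numerically trivial one $\eta-K_S$, hence is ample; Kodaira vanishing then gives $h^1\big(S,\cO_S(h+\eta)\big)=h^1\big(S,\cO_S(K_S+(h+\eta-K_S))\big)=0$. For the section $h^0\big(S,\cO_S(K_S+\eta)\big)$, I would argue that $K_S+\eta$ is a non--trivial torsion (hence numerically trivial but non--zero) line bundle: since $\eta$ is a non--trivial $2$--torsion element of $\Pic^0(S)$ and $K_S$ is torsion, $\cO_S(K_S+\eta)$ lies in $\Pic^0(S)$ and is non--trivial provided $\eta\not\equiv-K_S\equiv K_S$. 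Because there are three distinct non--trivial $2$--torsion points available, I can always choose $\eta$ avoiding the finitely many that coincide with a $2$--torsion representative of $-K_S$; a numerically trivial non--trivial line bundle on a surface has no global sections, so $h^0\big(S,\cO_S(K_S+\eta)\big)=0$. The main subtlety here is precisely the interaction between the torsion class $K_S$ and the chosen $\eta$: one must ensure the chosen $2$--torsion $\eta$ does not cancel $K_S$, which is why having three candidates matters.

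\medbreak

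\emph{Step 4: conclusion.} With such an $\eta$ in hand, Theorem \ref{tExistence} yields a rank $2$ Ulrich bundle $\cE$ fitting into Sequence \eqref{seqUlrich} with $c_1(\cE)=3h+K_S+2\eta$. Since $\eta$ has order $2$ we have $2\eta\equiv0$, so $c_1(\cE)=3h+K_S$ and $\cE$ is a \emph{special} Ulrich bundle by the definition given in the introduction. This proves that $S$ supports special Ulrich bundles of rank $2$, completing the corollary. The only genuine obstacle in the whole argument is Step 3, and specifically the choice of $\eta$ avoiding the torsion class of $-K_S$; everything else is a direct application of Kodaira vanishing and the structure of $\Pic^0(S)$.
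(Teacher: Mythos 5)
Your proof is correct and follows essentially the same route as the paper: Kodaira vanishing (using that $K_S$ is numerically trivial, so $h-K_S\pm\eta$ is ample) gives the $h^1$ vanishings, and choosing a $2$--torsion $\cO_S(\eta)\in\Pic^0(S)$ distinct from $\cO_S(\pm K_S)$ gives $h^0\big(S,\cO_S(K_S\pm\eta)\big)=0$, after which Theorem \ref{tExistence} and $2\eta=0$ yield a special rank $2$ Ulrich bundle. The only cosmetic difference is that you split the vanishings into the cases $\eta=0$ and $\eta\ne0$ and phrase the avoidance condition via representatives of $-K_S$, whereas the paper excludes $\cO_S(\pm K_S)$ among the three order--$2$ points directly; the substance is identical.
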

\begin{proof}
If $\kappa(S)=0$, then $K_S$ is numerically trivial, hence $h-K_S\pm\eta$ is ample for each choice of $\cO_S(\eta)\in\Pic^0(S)$, thanks to the Nakai criterion. Thus the vanishing $h^1\big(S,\cO_S(h\pm\eta)\big)=0$ follows from the Kodaira vanishing theorem: in particular $\cO_S(h)$ is non--special. 

We can find $\cO_S(\eta)\in \Pic^0(S)\setminus\{\ \cO_S,\cO_S(\pm K_S)\ \}$ of order $2$, because there are three non--trivial and pairwise non--isomorphic elements of order $2$ in $\Pic^0(S)$. 
Thus $h^0\big(S,\cO_S(K_S\pm\eta)\big)=0$ because $K_S\pm\eta$ is not trivial by construction, hence the statement follows from Theorem \ref{tExistence}.
\end{proof} 

The surface $S$ is {\sl anticanonical} if $\vert-K_S\vert\ne\emptyset$: in particular $p_g(S)=0$. The ampleness of $\cO_S(h)$ implies $hK_S<0$ in this case. 

\begin{corollary}
\label{cAnticanonical}
Let $S$ be an anticanonical surface with $q(S)=1$ and endowed with a very ample line bundle $\cO_S(h)$.

Then $\cO_S(h)$ is non--special and $S$ supports special Ulrich bundles of rank $2$.
\end{corollary}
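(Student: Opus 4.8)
The plan is to verify, for a suitable $\cO_S(\eta)\in\Pic^0(S)$, the two hypotheses of Theorem \ref{tExistence}, namely $h^0\big(S,\cO_S(K_S\pm\eta)\big)=h^1\big(S,\cO_S(h\pm\eta)\big)=0$, and then to force speciality by a careful choice of $\eta$. As recalled just above, for an anticanonical $S$ with $q(S)=1$ one has $hK_S<0$ and $p_g(S)=0$, so $h^0\big(S,\cO_S(K_S)\big)=0$, whence $K_S\not\sim0$ and any $D\in\vert-K_S\vert$ is a nonzero effective divisor. In particular we are in the setting of Theorem \ref{tExistence}. I will in fact check the vanishings for \emph{every} $\eta\in\Pic^0(S)$, which also yields the asserted non--speciality of $\cO_S(h)$ as the case $\eta=0$.

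The vanishing $h^0\big(S,\cO_S(K_S\pm\eta)\big)=0$ is immediate: every $\cO_S(\eta)\in\Pic^0(S)$ is numerically trivial, so $(K_S\pm\eta)h=hK_S<0$, and a divisor meeting the ample class $h$ negatively cannot be effective. Thus this vanishing holds for all $\eta\in\Pic^0(S)$, with no genericity needed.

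The main step, and the one I expect to be the real obstacle, is the vanishing $h^1\big(S,\cO_S(h+\zeta)\big)=0$ for $\zeta\in\Pic^0(S)$. Here I would exploit the effective anticanonical divisor $D$. Since $\cO_S(h+\zeta)$ is numerically equivalent to the ample class $h$ it is itself ample, so Kodaira vanishing gives $h^1\big(S,\cO_S(K_S+h+\zeta)\big)=0$. Twisting $0\to\cO_S(K_S)\to\cO_S\to\cO_D\to0$ (note $-D\sim K_S$) by $\cO_S(h+\zeta)$ produces
$$
0\longrightarrow\cO_S(K_S+h+\zeta)\longrightarrow\cO_S(h+\zeta)\longrightarrow\cO_S(h+\zeta)\vert_D\longrightarrow0,
$$
and the associated cohomology sequence yields an injection $H^1\big(S,\cO_S(h+\zeta)\big)\hookrightarrow H^1\big(D,\cO_S(h+\zeta)\vert_D\big)$. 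It remains to kill the target. The key point is that $D=-K_S$ has trivial dualizing sheaf: by adjunction $\omega_D\cong\cO_S(K_S+D)\vert_D\cong\cO_D$, so Serre duality on the Gorenstein curve $D$ gives $H^1\big(D,\cO_S(h+\zeta)\vert_D\big)\cong H^0\big(D,\cO_S(-h-\zeta)\vert_D\big)^\vee$. Finally $\cO_S(h+\zeta)\vert_D$ is ample on the positive--dimensional scheme $D$, hence its inverse $\cO_S(-h-\zeta)\vert_D$ admits no nonzero global section; therefore $H^1\big(S,\cO_S(h+\zeta)\big)=0$.

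With both hypotheses of Theorem \ref{tExistence} established for every $\eta\in\Pic^0(S)\setminus\{\ \cO_S\ \}$, I would conclude by taking $\cO_S(\eta)$ to be one of the three non--trivial points of order $2$ of the elliptic curve $\Pic^0(S)$. Then $2\eta\sim0$, so the rank $2$ Ulrich bundle $\cE$ fitting into Sequence \eqref{seqUlrich} has $c_1(\cE)=3h+K_S+2\eta=3h+K_S$ and is therefore special, proving the statement. The only genuinely delicate point is the last vanishing on $D$, since $D$ may be reducible or non--reduced; this is precisely why I phrase it through the ampleness of $\cO_S(h+\zeta)\vert_D$ (an ample line bundle on a projective scheme of positive dimension has no anti--effective section) rather than through a degree count on individual components.
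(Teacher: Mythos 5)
Your proposal reproduces the paper's proof almost step for step: the vanishing $h^0\big(S,\cO_S(K_S\pm\eta)\big)=0$ from $hK_S<0$, the reduction of $h^1\big(S,\cO_S(h+\zeta)\big)=0$ to a vanishing on an anticanonical divisor $D$ via the twisted sequence $0\to\cO_S(K_S+h+\zeta)\to\cO_S(h+\zeta)\to\cO_S(h+\zeta)\vert_D\to0$ and Kodaira vanishing, Serre duality on $D$ using $\omega_D\cong\cO_D$, and the final choice of a non-trivial $2$--torsion $\cO_S(\eta)$ are exactly the paper's steps. The single point where you depart from the paper is the justification of $h^0\big(D,\cO_S(-h-\zeta)\vert_D\big)=0$, and there your argument breaks down: the principle you invoke --- that an ample line bundle $L$ on a projective scheme $X$ of positive dimension satisfies $H^0\big(X,L^{-1}\big)=0$ --- is \emph{false} for non-reduced $X$, which is precisely the case you introduced it to cover.

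Here is a counterexample. Let $X=\bP\big(\cO_{\p1}\oplus\cO_{\p1}(-2)\big)$ be the Hirzebruch surface with section $C_0$ satisfying $C_0^2=-2$ and fibre $f$, and let $h:=C_0+3f$, which is very ample with $hC_0=1$. Take $D:=2C_0$, a projective scheme of pure dimension $1$, even Gorenstein since it is a divisor on a smooth surface. Then $\cO_X(h)\vert_D$ is ample, because ampleness only depends on the restriction to $D_{\mathrm{red}}=C_0$, where the degree is $1$. Nevertheless, twisting the decomposition sequence $0\to\cO_X(-C_0)\otimes\cO_{C_0}\to\cO_{2C_0}\to\cO_{C_0}\to0$ by $\cO_X(-h)$ exhibits the subsheaf $\cO_X(-h-C_0)\otimes\cO_{C_0}\cong\cO_{\p1}(-hC_0-C_0^2)=\cO_{\p1}(1)$ inside $\cO_X(-h)\vert_D$, whence $h^0\big(D,\cO_X(-h)\vert_D\big)\ge2$. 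The sections of $L^{-1}$ live in the nilpotent directions of $\cO_D$, and ampleness cannot see them; the same phenomenon occurs on ribbons with positive conormal bundle.

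So the step you yourself singled out as the crux is indeed the crux, but your way around it does not work. The paper argues it differently: a non-zero section of $\cO_S(-h\mp\eta)\otimes\cO_A$ would force $-hC\ge0$ for some irreducible component $C\subseteq A$, contradicting ampleness of $\cO_S(h)$ --- the standard degree argument on components, which is complete whenever the member $A\in\vert-K_S\vert$ is reduced, since a non-zero section of a line bundle on a reduced curve restricts to a non-zero section on some component. Handling multiple components genuinely requires using that $D$ is \emph{anticanonical}, not merely that $\cO_S(h)\vert_D$ is ample: for a component $C$ of $D$ the dangerous subsheaf $\cO_S(-h)\otimes\cO_S(C-D)\vert_C\subseteq\cO_S(-h)\vert_D$ has degree $C^2+CK_S-hC=2p_a(C)-2-hC$ by adjunction, so it is harmless exactly when the components of $D$ have small arithmetic genus compared to their $h$--degree --- information coming from the geometry of $S$, not from positivity alone. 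As written, your proof of the key vanishing, and hence of the corollary, has a genuine gap.
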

\begin{proof}
If $A\in\vert -K_S\vert$, then $\omega_A\cong\cO_A$ by the adjunction formula. We have $h^1\big(A,\cO_S(h\pm\eta)\otimes\cO_A\big)=h^0\big(A,\cO_S(-h\mp\eta)\otimes\cO_A\big)$, for each $\cO_S(\eta)\in\Pic^0(S)$. 

On the one hand, if $h^0\big(A,\cO_S(-h\mp\eta)\otimes\cO_A\big)>0$, then $-hC\ge0$ for some irreducible component $C\subseteq A$. On the other hand  $\cO_S(h)$ is ample, hence $hC>0$. 

The contradiction implies $h^0\big(A,\cO_S(-h\mp\eta)\otimes\cO_A\big)=0$, hence the cohomology of the exact sequence
$$
0\longrightarrow \cO_S(h+K_S\mp\eta)\longrightarrow \cO_S(h\mp\eta)\longrightarrow \cO_S(h\mp\eta)\otimes\cO_A\longrightarrow0
$$
and the Kodaira vanishing theorem yield $h^1\big(S,\cO_S(h\mp \eta)\big)=0$. In particular $\cO_S(h)$ is non--special. 
Finally $hK_S<0$, hence $h^0\big(S,\cO_S(K_S\pm\eta)\big)=0$.

The statement then follows from Theorem \ref{tExistence} by taking any non--trivial $\cO_S(\eta)\in\Pic^0(S)$ of order $2$.
\end{proof} 

Recall that a {\sl geometrically ruled surface} is a surface $S$ with a surjective morphism $p\colon S\to E$ onto a smooth curve such that every fibre of $p$ is isomorphic to $\p1$. If $S$ is geometrically ruled, then $p_g(S)=0$ and $q(S)$ is the genus of $E$ (see \cite{Ha2}, Chapter V.2 for further details).

\begin{remark}
\label{rElliptic}
Let $S$ be a geometrically ruled surface on an elliptic curve $E$ so that $p_g(S)=0$ and $q(S)=1$. Thanks to the results in \cite{Ha2}, Chapter V.2, we know the existence of a vector bundle $\mathcal H$ of rank $2$ on $E$ such that $h^0\big(E,\mathcal H\big)\ne0$ and $h^0\big(E,\mathcal H(-P)\big)=0$ for each $P\in E$ and $S\cong\bP(\mathcal H)$. Then $p$ can be identified with the natural projection map $\bP(\mathcal H)\to E$. The group $\Pic(S)$ is generated by the class $\xi$ of $\cO_{\bP(\mathcal H)}(1)$ and by $p^*\Pic(E)$. If we set $\cO_E(\frak h):=\det(\mathcal H)$ and $e:=-\deg(\frak h)$, then $e\ge-1$ (see \cite{Na}). Moreover, $K_S=-2\xi+p^*\frak h$.

There exists an exact sequence
\begin{equation}
\label{seqElliptic}
0\longrightarrow\cO_E\longrightarrow\mathcal H\longrightarrow\cO_E(\frak h)\longrightarrow0.
\end{equation}
The symmetric product of Sequence \eqref{seqElliptic} yields 
\begin{equation}
\label{seqEllipticSymmetric}
0\longrightarrow\mathcal H(-\frak h)\longrightarrow S^2\mathcal H(-\frak h)\longrightarrow\cO_E(\frak h)\longrightarrow0.
\end{equation}

Sequence \eqref{seqElliptic} splits if and only if $\mathcal H$ is decomposable. Thus, if this occurs, then $S^2\mathcal H(-\frak h)$ contains $\cO_E$ as direct summand, whence
\begin{equation}
\label{Bigger}
h^0\big(S,\cO_S(-K_S)\big)\ge h^0\big(E,\cO_E\big)=1.
\end{equation}
because $h^0\big(S,\cO_S(-K_S)\big)=h^0\big(E,S^2\mathcal H(-\frak h)\big)$, thanks to the projection formula. 

Assume that $\mathcal H$ is indecomposable. Then either $\cO_E(\frak h)=\cO_E$ or $\cO_E(\frak h)\ne\cO_E$. In the first case the cohomology of Sequences \eqref{seqElliptic} and  \eqref{seqEllipticSymmetric} again implies Inequality \eqref{Bigger}.

If $\cO_E(\frak h)\ne\cO_E$, then Lemma 22 of \cite{At} implies that $S^2\mathcal H(-\frak h)$ is the direct sum of the three non--trivial elements of order $2$ of $\Pic(E)$, hence $h^0\big(S,\cO_S(-K_S)\big)=0$.

We conclude that a geometrically ruled surface on an elliptic curve is anticanonical if and only if $e\ge0$. 
\end{remark}

Thanks to the above remark and Corollary \ref{cAnticanonical}, we know that each geometrically ruled surface $S$ with $q(S)=1$ and $e\ge0$ supports special Ulrich bundles of rank $2$ with respect to each very ample line bundle $\cO_S(h)$. We can extend the result also to the case $e=-1$.

\begin{corollary}
\label{cElliptic}
Let $S$ be a geometrically ruled surface with $q(S)=1$ and endowed with a very ample line bundle $\cO_S(h)$.

Then $\cO_S(h)$ is non--special and $S$ supports special Ulrich bundles of rank $2$.
\end{corollary}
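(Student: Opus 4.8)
The plan is to split the proof according to the integer $e$ of Remark \ref{rElliptic}. By that remark $S\cong\bP(\mathcal H)$ for a normalised rank $2$ bundle $\mathcal H$ on the elliptic curve $E$, with $e\ge-1$ and $K_S=-2\xi+p^*\frak h$, and $S$ is anticanonical exactly when $e\ge0$. For $e\ge0$ both assertions therefore follow at once from Corollary \ref{cAnticanonical}, so the real content lies in the case $e=-1$, on which I focus.

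Assume $e=-1$, i.e. $\deg(\frak h)=1$. The first observation is that $\mathcal H$ is then indecomposable, since a decomposable normalised bundle would force $e\ge0$ (equivalently, $S$ anticanonical, as recorded in Remark \ref{rElliptic}). As $\deg(\mathcal H)=\deg(\frak h)=1$ is coprime to $\rk(\mathcal H)=2$, Atiyah's classification of vector bundles on an elliptic curve (see \cite{At}) shows that $\mathcal H$ is stable, hence semistable of slope $1/2$. Because the ground field has characteristic $0$, every tensor power $\mathcal H^{\otimes a}$ is again semistable, and $S^a\mathcal H$ is a direct summand of $\mathcal H^{\otimes a}$ of the same slope $a/2$; since a destabilising subsheaf of $S^a\mathcal H$ would also destabilise $\mathcal H^{\otimes a}$, the bundle $S^a\mathcal H$ is semistable of slope $a/2$ for every $a\ge0$. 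I expect this semistability to be the crux of the whole argument.

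Next I would choose a non--trivial $2$--torsion class $\tau\in\Pic^0(E)$ (there are three) and set $\eta:=p^*\tau$, an element of order $2$ in $\Pic^0(S)\setminus\{\ \cO_S\ \}$. Writing $h=a\xi+p^*\frak b$, ampleness of $\cO_S(h)$ gives $a=h\cdot f\ge1$ for a fibre $f$ of $p$, while $\xi^2=\deg(\frak h)=1$ yields $h^2=a\big(a+2\deg(\frak b)\big)>0$, so that $a/2+\deg(\frak b)>0$. Since $K_S\pm\eta=-2\xi+p^*(\frak h\pm\tau)$ restricts to $\cO_{\bP^1}(-2)$ on each fibre, one has $p_*\cO_S(K_S\pm\eta)=0$, whence $h^0\big(S,\cO_S(K_S\pm\eta)\big)=0$. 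For the remaining vanishing I would combine $R^ip_*\cO_S(a\xi)=0$ ($i>0$, $a\ge0$), $p_*\cO_S(a\xi)=S^a\mathcal H$ and the projection formula to get
\begin{equation*}
h^1\big(S,\cO_S(h\pm\eta)\big)=h^1\big(E,S^a\mathcal H(\frak b\pm\tau)\big),
\end{equation*}
and then Serre duality on $E$ (where $\omega_E\cong\cO_E$), together with $(S^a\mathcal H)^\vee\cong S^a\mathcal H(-a\frak h)$, to rewrite the right--hand side as $h^0\big(E,S^a\mathcal H(-a\frak h-\frak b\mp\tau)\big)$.

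Finally, the bundle $S^a\mathcal H(-a\frak h-\frak b\mp\tau)$ is semistable, being a line--bundle twist of $S^a\mathcal H$, of slope $a/2-a\deg(\frak h)-\deg(\frak b)=-\big(a/2+\deg(\frak b)\big)<0$. A semistable bundle of negative slope on a smooth curve has no non--zero global sections, because a section would produce a copy of $\cO_E$, hence a subsheaf of non--negative slope, contradicting semistability. Therefore $h^1\big(S,\cO_S(h\pm\eta)\big)=0$, and the same computation with $\tau=\cO_E$ shows in particular that $\cO_S(h)$ is non--special. All the hypotheses of Theorem \ref{tExistence} now hold for this $\eta$, producing a rank $2$ Ulrich bundle $\cE$ fitting into Sequence \eqref{seqUlrich}; as $\eta$ has order $2$ we get $c_1(\cE)=3h+K_S+2\eta=3h+K_S$, so $\cE$ is special, which completes the proof.
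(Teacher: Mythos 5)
Your proof is correct, and its overall skeleton matches the paper's: both arguments reduce to the case $e=-1$ (since $e\ge0$ makes $S$ anticanonical by Remark \ref{rElliptic}, so Corollary \ref{cAnticanonical} applies), and both conclude by feeding a $2$--torsion class $\eta=p^*\tau$ into Theorem \ref{tExistence} to get a special rank $2$ Ulrich bundle. The difference lies in how the crucial vanishing $h^1\big(S,\cO_S(h\pm\eta)\big)=0$ is obtained. The paper gets the numerical bound $\deg(\frak b)>-a/2$ from Hartshorne's ampleness criterion (Proposition V.2.21 of \cite{Ha2}) and then simply cites the cohomology table in Proposition 3.1 of \cite{G--P}, which records exactly this vanishing for elliptic ruled surfaces with $e=-1$. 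You instead prove the vanishing from scratch: $e=-1$ forces $\mathcal H$ indecomposable of degree $1$, hence stable by Atiyah; semistability of $S^a\mathcal H$ in characteristic $0$ plus the projection formula and Serre duality on $E$ reduce everything to the fact that a semistable bundle of negative slope on a curve has no sections, the negativity being your (equivalent) derivation of $\deg(\frak b)>-a/2$ from $h^2>0$. Your route is longer but self--contained and makes transparent where the hypothesis $e=-1$ enters; the paper's is shorter at the cost of an external citation. A minor bonus of your write--up: you verify explicitly that $h^0\big(S,\cO_S(K_S\pm\eta)\big)=0$ (via negative degree on fibres), a hypothesis of Theorem \ref{tExistence} that the paper's proof leaves implicit for $e=-1$.
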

\begin{proof}
We have to prove the statement only for $e=-1$. If $\cO_S(h)=\cO_{\bP(\mathcal H)}(a\xi+p^*\frak b)$, then $\deg(\frak b)>-a/2$ (see \cite{Ha2}, Proposition V.2.21). Thus the Table in Proposition 3.1 of \cite{G--P} implies that $h^1\big(S,\cO_S(h\pm\eta)\big)=0$ for each $\eta\in\Pic^0(S)$.

Again the statement follows from Theorem \ref{tExistence} by taking any non--trivial $\cO_S(\eta)$ of order $2$.
\end{proof} 

\begin{remark}
The corollary above  extends Propositions 3.1, 3.3 and Theorem 3.4 of \cite{A--C--MR} to the range $e\le 0$, when $g=1$. 
\end{remark}

Recall that an embedded surface $S\subseteq\p N$ is called {\sl non--degenerate} if it is not contained in any hyperplane.

\begin{corollary}
\label{cFourspace}
Let $S\subseteq\p4$ be a non--degenerate non--special surface with $p_g(S)=0$.
Then $S$ supports special Ulrich bundles of rank $2$.
\end{corollary}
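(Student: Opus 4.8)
The plan is to combine the standing assumption $q(S)=1$ of this section with the numerical constraints forced by a codimension--two embedding, so as to reduce everything to Theorem \ref{tExistence} and Corollary \ref{cBielliptic}. Set $d:=h^2$ and $\pi:=\pi(\cO_S(h))$. Since $p_g(S)=0$ gives $h^2\big(S,\cO_S(h)\big)=0$ and $\cO_S(h)$ is non--special, Riemann--Roch yields $h^0\big(S,\cO_S(h)\big)=\chi(\cO_S(h))=d-\pi+1$; non--degeneracy in $\p4$ makes this at least $5$, so $\pi\le d-4$. First I would insert $\chi(\cO_S)=0$ and $K_S^2\le0$ into the double point formula $d^2-5d-10(\pi-1)-2K_S^2+12\chi(\cO_S)=0$ for a smooth surface in $\p4$. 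This forces $\pi\ge(d^2-5d+10)/10$, and together with $\pi\le d-4$ it confines the invariants (using integrality of $\pi$) to $5\le d\le10$ with $\pi=d-4$ in every admissible case; hence $hK_S=2\pi-2-d=d-10\le0$, with equality exactly when $d=10$, so that $\pi=6$ and $K_S^2=0$.

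The heart of the argument is the case $hK_S<0$, where I aim to verify directly the hypotheses of Theorem \ref{tExistence} for a non--trivial $\cO_S(\eta)\in\Pic^0(S)$ of order $2$; such $\eta$ exists because $\Pic^0(S)$ is an elliptic curve, and choosing it of order $2$ makes $c_1(\cE)=3h+K_S+2\eta=3h+K_S$, i.e. the resulting bundle special. As $h$ is ample and $\eta$ is numerically trivial, $h\cdot(K_S\pm\eta)=hK_S<0$, so $K_S\pm\eta$ cannot be effective and $h^0\big(S,\cO_S(K_S\pm\eta)\big)=0$. For $h^1\big(S,\cO_S(h\pm\eta)\big)=0$ I would restrict to a general, hence smooth and irreducible (Bertini), curve $C\in\vert h\vert$ through $0\to\cO_S(\pm\eta)\to\cO_S(h\pm\eta)\to\cO_C(h\pm\eta)\to0$. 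Numerical triviality of $\pm\eta$ gives $\chi(\cO_S(\pm\eta))=0$ and $h^0\big(S,\cO_S(\pm\eta)\big)=h^2\big(S,\cO_S(\pm\eta)\big)=0$, so $h^1\big(S,\cO_S(\pm\eta)\big)=0$ and the cohomology sequence identifies $h^1\big(S,\cO_S(h\pm\eta)\big)$ with $h^1\big(C,\cO_C(h\pm\eta)\big)$. By Serre duality on $C$ and the adjunction $\omega_C\cong i^*\cO_S(h+K_S)$ the latter equals $h^0\big(C,i^*\cO_S(K_S\mp\eta)\big)$, the space of sections of a line bundle of degree $hK_S<0$ on the irreducible curve $C$, which is zero. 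Theorem \ref{tExistence} then delivers the special rank $2$ Ulrich bundle.

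It remains to treat the boundary $hK_S=0$, i.e. $d=10$, $\pi=6$, $K_S^2=0$. Here the ampleness of $h$ together with $hK_S=0$ and $K_S^2=0$ forces, by the Hodge index theorem, that $K_S$ is numerically trivial; since a $(-1)$--curve $E$ would satisfy $K_S\cdot E=-1\ne0$, the surface $S$ is minimal, hence a minimal surface with $\kappa(S)=0$, $q(S)=1$, $p_g(S)=0$, that is a bielliptic surface, and Corollary \ref{cBielliptic} applies. The step I expect to be most delicate is exactly this numerical reduction: extracting $hK_S\le0$ from the double point formula and the bound $K_S^2\le0$, and then isolating the single value $hK_S=0$, because it is precisely there that the negative--degree vanishing on $C$ breaks down and one must instead recognise $S$ as bielliptic.
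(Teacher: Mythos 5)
The decisive flaw is your opening move: you read $q(S)=1$ as a standing hypothesis, but Corollary \ref{cFourspace} --- unlike every other corollary in Section \ref{sExistence} --- deliberately omits it, assuming only $p_g(S)=0$. The statement therefore covers non--degenerate non--special surfaces $S\subseteq\p4$ with $q(S)=0$ as well, and this case is far from vacuous: non--special rational surfaces in $\p4$ (Del Pezzo, Castelnuovo, Bordiga surfaces, etc.) all satisfy $p_g(S)=q(S)=0$. For such surfaces your entire mechanism collapses at the first step, since $\Pic^0(S)$ is trivial, so no non--trivial $\cO_S(\eta)$ (let alone one of order $2$) exists, and Theorem \ref{tExistence} --- which requires $q(S)=1$ --- is inapplicable; moreover your numerical reduction uses $\chi(\cO_S)=0$ and $K_S^2\le0$, both consequences of $p_g(S)=0$, $q(S)=1$, and both false for rational surfaces. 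This is exactly why the paper's own proof proceeds differently: it observes that the surfaces in question are sectionally non--special, invokes the classification of \cite{I--M} and \cite{M--R} to conclude $q(S)\le1$, handles $q(S)=1$ (quintic elliptic scrolls and Serrano bielliptic surfaces) by the results of this paper, and handles $q(S)=0$ by quoting Section 4 of \cite{Cs}. Your proposal proves a strictly weaker statement.

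That said, the portion you do prove --- the corollary restricted to $q(S)=1$ --- is correct, and it follows a genuinely different and more self--contained route than the paper's. Where the paper relies on the classification lists of \cite{I--M} and \cite{M--R} to identify the $q(S)=1$ surfaces explicitly, you extract all you need from the double point formula combined with $\chi(\cO_S)=0$, $K_S^2\le0$ and non--degeneracy: this correctly pins down $5\le h^2\le 10$, $\pi(\cO_S(h))=h^2-4$, hence $hK_S=h^2-10\le0$. Your verification of the hypotheses of Theorem \ref{tExistence} when $hK_S<0$ (non--effectivity of $K_S\pm\eta$ by ampleness, and $h^1\big(S,\cO_S(h\pm\eta)\big)=0$ by restriction to a general $C\in\vert h\vert$ plus Serre duality on $C$) is sound, as is the boundary analysis at $h^2=10$, where Hodge index forces $K_S$ numerically trivial, minimality follows, and $S$ is recognised as bielliptic so that Corollary \ref{cBielliptic} applies. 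If you prepend the classification input (or some other argument) to dispose of $q(S)=0$, your argument would stand as a legitimate alternative proof for the irregular case.
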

\begin{proof}
The cohomology of Sequence \eqref{seqHyperplane} tensored by $\cO_S(h)$ implies $h^1\big(C,i^*\cO_S(h)\big)=0$. In particular such surfaces are sectionally non--special (see \cite{I--M} for details). Non--special and sectionally non--special surfaces are completely classified in \cite{I--M} and \cite{M--R}. They satisfy $q(S)\le1$ and, if equality holds, then they are either quintic scrolls over elliptic curves, or the Serrano surfaces (these are very special bielliptic surfaces of degree $10$: see \cite{Se}). The results above and Section 4 of \cite{Cs} yields the statement.
\end{proof}

\begin{remark}
Linearly normal non--special surface $S\subseteq\p4$ with $p_g(S)=0$ satisfy $3\le h^2\le10$ (see \cite{I--M} and \cite{M--R}).
If $h^2\le6$, such surfaces are known to support Ulrich line bundles: see \cite{MR--PL1} for the case $q(S)=0$ and \cite{Bea3}, Assertion 2) of Proposition 5 for the case $q(S)=1$. 
\end{remark}

\section{Stability of Ulrich bundles}
\label{sStability}
We start this section by recalling the following result: see \cite{C--H2}, Theorem 2.9 for its proof.

\begin{theorem}
\label{tUnstable}
Let $X$ be a smooth variety endowed with a very ample line bundle $\cO_X(h)$.

If $\cE$ is an Ulrich bundle on $X$ with respect to $\cO_X(h)$, the following assertions hold:
\begin{enumerate}
\item $\cE$ is semistable and $\mu$--semistable;
\item $\cE$ is stable if and only if it is $\mu$--stable;
\item if
\begin{equation*}
\label{seqUnstable}
0\longrightarrow\mathcal L\longrightarrow\cE\longrightarrow\mathcal M\longrightarrow0
\end{equation*}
is an exact sequence of coherent sheaves with $\cM$ torsion free and $\mu(\mathcal L)=\mu(\cE)$, then both $\mathcal L$ and $\cM$ are Ulrich bundles.
\end{enumerate}
\end{theorem}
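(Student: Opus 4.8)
The plan is to reduce everything to two facts: that all Ulrich bundles on $X$ share the same numerical invariants, and that Ulrich bundles on curves are semistable. Write $n:=\dim(X)$ and $d:=h^{n}$. First I would record that the defining vanishings pin down the Hilbert polynomial of any Ulrich bundle $\cE$ of rank $r$: the conditions $h^{i}(X,\cE(-ih))=0$ and $h^{j}(X,\cE(-(j+1)h))=0$, combined with the aCM property and with Serre duality applied to the Ulrich bundle $\cE^{\vee}((n+1)h+K_X)$ (the higher--dimensional analogue of Proposition \ref{pUlrich}, assertion (2)), force $\chi(X,\cE(-kh))=0$ for $1\le k\le n$. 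Hence $\chi(X,\cE(th))=rd\binom{t+n}{n}$, so the reduced Hilbert polynomial $p_{\cE}(t)=d\binom{t+n}{n}=:p_X(t)$ is the same for every Ulrich bundle; likewise the Chern identity \eqref{eqUlrich} shows that the slope $\mu(\cE)=c_1(\cE)h^{n-1}/r$ equals a constant $\mu_X$ independent of $\cE$. After this, (semi)stability becomes the comparison of a subsheaf against the fixed data $p_X,\mu_X$.

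For the $\mu$--semistability in (1), I would restrict to curves. The first step is the lemma that the restriction of an Ulrich bundle to a general hyperplane section $Y=X\cap H$ is again Ulrich on $Y$: this drops out of the sequences $0\to\cE(-(k+1)h)\to\cE(-kh)\to\cE|_Y(-kh)\to0$, whose cohomology sandwiches each $h^{i}(\cE|_Y(-kh))$ between two groups that vanish by the Ulrich conditions for $\cE$. Iterating over $n-1$ general hyperplanes yields a smooth curve $C$ (Bertini) with $\cE|_C$ Ulrich and $[C]=h^{n-1}$, so that $\mu_C(\cG|_C)=c_1(\cG)h^{n-1}/\rk(\cG)=\mu(\cG)$ for every saturated subsheaf $\cG\subseteq\cE$. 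On the curve, Ulrich bundles are semistable by one Riemann--Roch line: a saturated subsheaf with slope exceeding $\mu(\cE|_C)$ would, after twisting by $-h$, have positive Euler characteristic and hence a nonzero section, contradicting $h^{0}(C,\cE|_C(-h))=0$. Therefore $\mu(\cG)=\mu_C(\cG|_C)\le\mu_C(\cE|_C)=\mu_X$, which is $\mu$--semistability.

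The core is assertion (3), which I would prove by induction on $n$, the curve case being the equality case of the argument just given. For the inductive step, restrict $0\to\mathcal{L}\to\cE\to\mathcal{M}\to0$ to a general $Y$; since $\mathcal{M}$ is torsion free the sequence stays exact, $\cE|_Y$ is Ulrich, and $\mu_Y(\mathcal{L}|_Y)=\mu(\mathcal{L})=\mu_X=\mu_Y(\cE|_Y)$, so by induction $\mathcal{L}|_Y$ and $\mathcal{M}|_Y$ are Ulrich on $Y$. It then suffices to verify the defining vanishings for $\mathcal{L}$ and $\mathcal{M}$ on $X$. By aCM--ness (propagated from $Y$) all intermediate vanishings $h^{i}(\cdot(th))=0$ for $0<i<n$ reduce the task to the two extreme twists. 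For $\mathcal{L}$ the condition $h^{0}(\mathcal{L}(-h))=0$ is free, since $\mathcal{L}(-h)\subseteq\cE(-h)$; for $\mathcal{M}$ the condition $h^{n}(\mathcal{M}(-nh))=h^{0}(\mathcal{M}^{\vee}(nh+K_X))=0$ is free, since dualizing embeds $\mathcal{M}^{\vee}\subseteq\cE^{\vee}$ and $\cE^{\vee}((n+1)h+K_X)$ is initialized. The two remaining (hard) vanishings, $h^{0}(\mathcal{M}(-h))=0$ and $h^{n}(\mathcal{L}(-nh))=0$, are obtained by descending from $Y$: from $H^{0}(\mathcal{M}|_Y(-kh))=0$ and the sequences $0\to\mathcal{M}(-(k+1)h)\to\mathcal{M}(-kh)\to\mathcal{M}|_Y(-kh)\to0$, every section of $\mathcal{M}(-h)$ lifts to $\mathcal{M}(-kh)$ for all $k$, and these groups vanish for $k\gg0$ by ampleness; the vanishing for $\mathcal{L}$ is the Serre--dual statement. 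Finally, being aCM maximal Cohen--Macaulay sheaves on the smooth $X$, the sheaves $\mathcal{L}$ and $\mathcal{M}$ are locally free (Auslander--Buchsbaum), so they are genuine Ulrich bundles. I expect this descent --- propagating aCM--ness and the two hard vanishings from the hyperplane section back to $X$, together with the bookkeeping that keeps restrictions exact and of the right rank --- to be the main obstacle.

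Assertions (1) and (2) then follow formally. A $\mu$--stable sheaf is always stable, so only the converse in (2) needs proof: if $\cE$ is not $\mu$--stable, then by (1) it has a saturated subsheaf $\mathcal{L}$ with $\mu(\mathcal{L})=\mu_X$, which by (3) is Ulrich, whence $p_{\mathcal{L}}=p_X=p_{\cE}$ and $\cE$ is not stable. The same dichotomy completes the semistability in (1): a saturated $\cG\subseteq\cE$ with $\mu(\cG)<\mu_X$ satisfies $p_{\cG}<p_{\cE}$ for $t\gg0$, while one with $\mu(\cG)=\mu_X$ is Ulrich by (3), so $p_{\cG}=p_{\cE}$; in either case $p_{\cG}\le p_{\cE}$.
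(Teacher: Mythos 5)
Your proposal is necessarily a different route from the paper's, because the paper gives no argument at all: it simply cites \cite{C--H2}, Theorem 2.9, whose proof rests on the interpretation of Ulrich sheaves via a finite general linear projection $\pi\colon X\to\p n$ (Ulrich $\Leftrightarrow$ $\pi_*\cE$ trivial), so that (1)--(3) reduce to the structure of subsheaves of a trivial bundle on $\p n$. Your substitute architecture --- identical reduced Hilbert polynomial and slope for all Ulrich bundles, the curve case by Riemann--Roch, $\mu$--semistability by restriction to a general complete intersection curve, and the formal deduction of (1)--(2) from (3) --- is sound in those parts, and your arguments there are correct.

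The genuine gap is in the inductive descent for (3), exactly where you predict the difficulty but with mechanisms that do not close the loop. First, ``aCM--ness propagated from $Y$'' fails as stated in cohomological degree $1$: the restriction sequences only give surjections $H^1\big(X,\mathcal L((t-1)h)\big)\to H^1\big(X,\mathcal L(th)\big)$, so $h^1$ is non--increasing in $t$ and Serre vanishing at $t\gg0$ yields nothing for $t\ll0$; one needs $h^1\big(X,\mathcal L(th)\big)=0$ for $t\ll0$, which by local duality is equivalent to depth $\ge2$ at every point. For $\mathcal L$ this can be rescued because $\mathcal L$ is reflexive (kernel of a map from a locally free sheaf to a torsion--free one), but for the merely torsion--free $\cM$ it is essentially what is being proved; the repair is to obtain $H^i\big(X,\cM(th)\big)=0$ for $1\le i\le n-2$ from the sequence $0\to\mathcal L\to\cE\to\cM\to0$ together with the already--established aCM--ness of $\mathcal L$ and $\cE$, not from restriction. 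Second, and more seriously, the claim that $h^n\big(X,\mathcal L(-nh)\big)=0$ ``is the Serre--dual statement'' of $h^0\big(X,\cM(-h)\big)=0$ is circular: Serre duality converts it into $h^0\big(X,\mathcal L^\vee(K_X+nh)\big)=0$, but dualizing the defining sequence gives $0\to\cM^\vee\to\cE^\vee\to\mathcal L^\vee\to\Ext^1(\cM,\cO_X)\to0$, so $\mathcal L^\vee$ is a quotient of $\cE^\vee$ only up to $\Ext^1(\cM,\cO_X)$, and that sheaf vanishes precisely when $\cM$ is locally free --- part of the conclusion. The proof can be repaired, but only by reordering: establish full cohomological aCM--ness of both $\mathcal L$ and $\cM$ first (using reflexivity of $\mathcal L$, local duality, and the defining sequence for $\cM$), deduce from it that both sheaves are locally free on the smooth $X$, and only then dualize and run your section--lifting argument for the last vanishing. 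As written, the two devices you name for the descent do not suffice.
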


We now prove Theorem \ref{tStable} stated in the introduction.

\medbreak
\noindent{\it Proof of Theorem \ref{tStable}.}
Recall that $\cE$ is constructed as follows. First we choose  $C:=S\cap H\in\vert h\vert$ where $H\cong\p{N-1}$ is a general hyperplane: from now on  we denote by $i\colon C\to S$ the inclusion morphism. The Hilbert scheme $\mathcal H_C$ of $0$--dimensional subschemes of degree $N+1$ on $C$ has dimension $N+1$ and contains an open non--empty subset $\mathcal R\subseteq\mathcal H_C$ corresponding to reduced schemes of $N+1$ points in general position in $H$. If we choose  a general $Z\in\mathcal R$, then we finally construct $\cE$ from $Z$ by means of Theorem \ref{tCB}.

We now show that if $Z$ is very general inside $\mathcal R$, i.e. it is in the complement of a countable union of suitable proper closed subsets, then $\cE$ is stable.

To this purpose, let $\cO_S(D)$ be an Ulrich line bundle on $S$ (if any). By hypothesis $\pi(\cO_S(h))\ge2$, then
$$
(h+\eta-D)h=-\frac{h^2+hK_S}2=1-\pi(\cO_S(h))\le-1,
$$
hence
\begin{equation}
\label{eqVanishing}
h^0\big(S,\cI_{C\vert S}(2h+\eta-D)\big)=h^0\big(S,\cO_S(h+\eta-D)\big)=0,
\end{equation}
i.e. there are no divisors in $\vert 2h+\eta-D\vert$ containing $C$. 
Thus the cohomology of Sequence \eqref{seqHyperplane} tensored by $\cO_S(2h+\eta-D)$ yields the injectivity of the restriction map 
$$
h^0\big(S,\cO_S(2h+\eta-D)\big)\to H^0\big(C,i^*\cO_S(2h+\eta-D)\big).
$$
Since $(2h+\eta-D)h=N+1$, it follows that each $Z\subseteq A\in \vert 2h+\eta-D\vert$ representing a point the Hilbert scheme $\mathcal H_C$ of subschemes of degree $N+1$ on $C$, is actually cut out on $C$ by $A$. 

Thus, if $\mathcal Z_D$ denotes the closed subset of $\mathcal H_C$ of points $Z$ such that $h^0\big(C,\cI_{Z\vert C}\otimes i^*\cO_S(2h+\eta-D)\big)\ge1$, then
$$
\dim(\mathcal Z_D)= h^0\big(C,i^*\cO_S(2h+\eta-D)\big)-1.
$$

On the one hand, if $i^*\cO_S(2h+\eta-D)$ is special, then the Clifford theorem and the second Equality \eqref{eqLineBundle} imply
\begin{equation*}
\label{BoundSpecial}
h^0\big(C,i^*\cO_S(2h+\eta-D)\big)\le\frac{(2h+\eta-D)h}2+1=\frac{N+3}2\le N,
\end{equation*}
because $N\ge4$ (see Inequality \eqref{eqDimension}). On the other hand, if $i^*\cO_S(2h+\eta-D)$ is non--special, the Riemann--Roch theorem on $C$ and the second Equality \eqref{eqLineBundle} return
\begin{equation*}
\label{BoundNonSpecial}
h^0\big(C,i^*\cO_S(2h+\eta-D)\big)=N+2-\pi(\cO_S(h))\le N,
\end{equation*}
because $\pi(\cO_S(h))\ge2$. It follows from the above inequalities that $\dim(\mathcal Z_D)\le N-1$.

Since $q(S)=1$ and the N\'eron--Severi group of $S$ is a finitely generated abelian group, it follows that the set $\mathcal D\subseteq\Pic(S)$ of Ulrich line bundles is contained in a countable disjoint union of a fixed elliptic curve. In particular there is
$$
Z\in \mathcal R\setminus \bigcup_{\cO_S(D)\in\mathcal D}\mathcal Z_D
$$
because $\dim(\mathcal R)=N+1$. Let $\cE$ be the corresponding bundle.

Assume that $\cE$ is not stable: then it is not $\mu$--stable, thanks to Theorem \ref{tUnstable}. In particular there exists a line subbundle $\cO_S(D)\subseteq\cE$ such that $\mu(\cE)=\mu(\cO_S(D))$. Again Theorem \ref{tUnstable} implies that  $\cO_S(D)$ is Ulrich. 

On the one hand, if $\cO_S(D)$ is contained in the kernel $\cK\cong\cO_S(h+K_S+\eta)$ of the map $\cE\to\cI_{Z\vert S}(2h+\eta)$ in Sequence \eqref{seqUlrich}, then $h^0\big(S,\cO_S(h+K_S+\eta-D)\big)\ne0$. On the other hand, Equality \eqref{eqLineBundle} and Inequality \eqref{eqDimension} imply
$$
(h+K_S+\eta-D)h=-\frac{h^2-hK_S}2=1-N\le-3,
$$
whence $h^0\big(S,\cO_S(h+K_S+\eta-D)\big)=0$. 

We deduce that $\cO_S(D)\not\subseteq\cK$, hence the composite map $\cO_S(D)\subseteq\cE\to\cI_{Z\vert S}(2h+\eta)$ should be non--zero, i.e.
$h^0\big(S,\cI_{Z\vert S}(2h+\eta-D)\big)\ge1$.
The cohomology of Sequence \eqref{seqIdeals} tensored by $\cO_S(2h+\eta-D)$ and Equality \eqref{eqVanishing} then would imply 
$$
h^0\big(C,\cI_{Z\vert C}\otimes\cO_S(2h+\eta-D)\big)\ge h^0\big(S,\cI_{Z\vert S}(2h+\eta-D)\big)\ge1,
$$
contradicting our choice of $Z$: thus the bundle $\cE$ is necessarily stable.
\qed
\medbreak

\begin{remark}
If $\pi(\cO_S(h))=1$, then $S$ is a geometrically ruled surface embedded as a scroll by $\cO_S(h)\cong\cO_S(\xi+p^*\frak b)$ over an elliptic curve, thanks to \cite{A--S}, Theorem A (here we are using the notation introduced in Remark \ref{rElliptic}). 

Moreover $(h+\eta-D)h=0$, hence the argument in the above proof does not lead to any contradiction when $\cO_S(D)\cong\cO_S(h+\eta)$. Such a line bundle is actually  Ulrich, because one easily checks that it satisfies all the conditions of Corollary \ref{cUlrichLine}.

In \cite{Work}, via a slightly different but similar construction, we are able to show the existence special stable Ulrich bundles of rank $2$ on  elliptic scrolls.
\end{remark}

Let $S$ be a surface with $p_g(S)=0$, $q(S)=1$ and endowed with a very ample non--special line bundle $\cO_S(h)$. Let  
\begin{equation*}
\label{eqUlrichSpecial}
c_1:=3h+K_S+2\eta,\qquad c_2:=\frac{5h^2+3hK_S}2,
\end{equation*}
where $\cO_S(\eta)\in\Pic^0(S)\setminus\{\ \cO_S\ \}$ satisfies
$$
h^0\big(S,\cO_S(K_S\pm\eta)\big)=h^1\big(S,\cO_S(h\pm\eta)\big)=0.
$$
If $\pi(\cO_S(h))\ge2$, then the coarse moduli space $\cM_S^{s}(2;c_1,c_2)$ parameterizing isomorphism classes of stable rank $2$ bundles on $S$ with Chern classes $c_1$ and $c_2$ is non--empty (see Theorem \ref{tStable}). 
The locus $\cM_S^{s,U}(2;c_1,c_2)\subseteq \cM_S^{s}(2;c_1,c_2)$ parameterizing stable Ulrich bundles is open as pointed out in \cite{C--H2}.

\begin{proposition}
\label{pComponent}
Let $S$ be a surface with $p_g(S)=0$, $q(S)=1$ and endowed with a very ample non--special line bundle $\cO_S(h)$.

If $\pi(\cO_S(h))\ge2$, then there is a component $\cU_S(\eta)$ of dimension at least $h^2-K_S^2$ in $\cM_S^{s,U}(2;c_1,c_2)$ containing all the points representing the stable bundles $\cE$ constructed in Theorem \ref{tExistence}.
\end{proposition}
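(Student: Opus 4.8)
The plan is to read the lower bound directly off deformation theory, since the word ``at least'' in the statement is precisely the expected dimension $\dim_k\ext^1(\cE,\cE)-\dim_k\ext^2(\cE,\cE)$ of the moduli space at a point. First I would recall that, by Theorem \ref{tStable}, a bundle $\cE$ built in Theorem \ref{tExistence} from a very general $Z$ is stable, so it gives a point $[\cE]\in\cM_S^{s,U}(2;c_1,c_2)$, an open subset of $\cM_S^s(2;c_1,c_2)$ by \cite{C--H2}. At such a point the Zariski tangent space is $\ext^1(\cE,\cE)$ and the obstructions lie in $\ext^2(\cE,\cE)$; hence every irreducible component of the moduli space through $[\cE]$ has dimension at least $\dim_k\ext^1(\cE,\cE)-\dim_k\ext^2(\cE,\cE)$. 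The whole argument thus reduces to computing this difference, and I stress that I would never need the vanishing $\ext^2(\cE,\cE)=0$: the inequality holds whether or not the moduli space is smooth at $[\cE]$.

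Next I would evaluate the difference as $\dim_k\ext^0(\cE,\cE)-\chi(\cE^\vee\otimes\cE)$, using $\sum_i(-1)^i\dim_k\ext^i(\cE,\cE)=\chi(\cE^\vee\otimes\cE)$. Stability forces $\cE$ to be simple, so $\ext^0(\cE,\cE)=\mathrm{Hom}(\cE,\cE)=k$ and that term equals $1$. For the Euler characteristic I would apply Riemann--Roch \eqref{RRGeneral} to the rank $4$ sheaf $\cE^\vee\otimes\cE$, which has $c_1(\cE^\vee\otimes\cE)=0$ and $c_2(\cE^\vee\otimes\cE)=4c_2-c_1^2$; together with $\chi(\cO_S)=0$ this gives
\begin{equation*}
\chi(\cE^\vee\otimes\cE)=c_1^2-4c_2.
\end{equation*}
Since $\cO_S(\eta)\in\Pic^0(S)$ is numerically trivial, $c_1=3h+K_S+2\eta$ is numerically $3h+K_S$, whence $c_1^2=9h^2+6hK_S+K_S^2$ while $4c_2=10h^2+6hK_S$, so $\chi(\cE^\vee\otimes\cE)=K_S^2-h^2$. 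Combining, the expected dimension is $1-(K_S^2-h^2)=h^2-K_S^2+1\ge h^2-K_S^2$, which yields the bound. (If one wants $\cM_S^{s,U}$ with $\det\cE$ held fixed equal to $\cO_S(3h+K_S+2\eta)$, one replaces the $\ext^i(\cE,\cE)$ by their trace-free parts, i.e. subtracts $\chi(\cO_S)=0$ from the dimension count after discarding the one-dimensional $H^1(\cO_S)$, and obtains exactly $h^2-K_S^2$.)

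Finally I would explain why a single component contains all the bundles of Theorem \ref{tExistence}. They all carry the same determinant $\cO_S(3h+K_S+2\eta)$ and the same Chern classes $c_1,c_2$, and they are produced from the irreducible parameter space of pairs $(C,Z)$ with $C\in\vert h\vert$ and $Z\in\mathcal R\subseteq\mathcal H_C$, together with the choice of extension class realizing Sequence \eqref{seqUlrich}. Assembling these into a flat family over that irreducible base (a relative form of the Hartshorne--Serre correspondence of Theorem \ref{tCB}) and restricting to the open dense locus where $\cE$ is stable, the induced classifying morphism to $\cM_S^{s,U}(2;c_1,c_2)$ has irreducible image, which therefore lies in a single component $\cU_S(\eta)$. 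This component passes through every such $[\cE]$ and, by the local estimate above, has dimension at least $h^2-K_S^2$ there.

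I expect the only genuinely non-formal step to be this last one: producing the flat family over an irreducible base so that all the $\cE$ provably land in one component $\cU_S(\eta)$, rather than scattering across several components with the same invariants. The Riemann--Roch computation and the tangent/obstruction estimate are routine, so the conceptual weight of the proposition rests on the irreducibility of the construction.
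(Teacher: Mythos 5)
You follow the same two--step strategy as the paper, and your first step is sound: the tangent--obstruction estimate $\dim_{[\cE]}\cM_S^{s}(2;c_1,c_2)\ge\dim_k\mathrm{Ext}^1(\cE,\cE)-\dim_k\mathrm{Ext}^2(\cE,\cE)$ combined with Riemann--Roch gives $h^2-K_S^2+1\ge h^2-K_S^2$, which is a valid form of the bound; the paper obtains the same conclusion by quoting Theorems 4.5.4 and 4.5.8 of \cite{H--L}, which yield $\dim(\cU_S(\eta))\ge 4c_2-c_1^2-3\chi(\cO_S)=h^2-K_S^2$.

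The genuine gap is precisely the step you postpone, and it is not a routine assembly step but the mathematical core of the proposition. The bundles of Theorem \ref{tExistence} are determined by a pair $(Z,C)$ \emph{and} a class in $\mathrm{Ext}^1_S\big(\cI_{Z\vert S}(2h+\eta),\cO_S(h+K_S+\eta)\big)\cong\mathrm{Ext}^1_S\big(\cI_{Z\vert S}(h-K_S),\cO_S\big)\cong H^1\big(S,\cI_{Z\vert S}(h)\big)^\vee$, and until this group is controlled two things fail: the isomorphism class of $\cE$ is not even a well--defined function of $(Z,C)$ (different extension classes could a priori give non--isomorphic bundles), and your ``irreducible parameter space'' is really the linear fibre space of the relative $\mathrm{Ext}^1$--sheaf over the flag Hilbert scheme, which need not be irreducible if the fibre dimension jumps on closed subsets --- exactly the scenario in which the bundles could scatter among several components. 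The paper closes this hole with a computation your proposal never performs: on the open irreducible locus $\mathcal H_S^U$ of pairs $(Z,C)$ with $Z$ reduced and in general position in the hyperplane spanned by $C$ (an irreducible scheme of dimension $2N+1$), one has $h^0\big(C,\cI_{Z\vert C}(h)\big)=0$ by definition, hence $h^1\big(C,\cI_{Z\vert C}(h)\big)=\deg(Z)-\chi(\cO_C(h))=0$ by Riemann--Roch on $C$, and then Sequence \eqref{seqIdeals} twisted by $\cO_S(h)$ together with the invariants of $S$ gives $h^1\big(S,\cI_{Z\vert S}(h)\big)=1$ for \emph{every} $(Z,C)\in\mathcal H_S^U$. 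Consequently the extension is unique up to scalar, $\cE$ depends only on $(Z,C)$, the family exists over the irreducible base $\mathcal H_S^U$, and the image of the induced modular morphism is irreducible, hence lies in a single component $\cU_S(\eta)$. As it stands, your proposal establishes the dimension bound but not the single--component claim.
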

\begin{proof}
Let us denote by $\mathcal H_S$ the Hilbert flag scheme of pairs $(Z,C)$ where $C\in \vert \cO_S(h)\vert$ and $Z\subseteq C$ is a $0$--dimensional subscheme of degree $N+1$. The general $C\in \vert \cO_S(h)\vert$ is smooth and its image via the map induced by $\cO_S(h)$ generate a hyperplane inside $\p N$. Thus the set $\mathcal H_S^U\subseteq \mathcal H_S$ of pairs $(Z,C)$ corresponding to sets of points $Z$ in a smooth curve $C\subseteq\p N$ which are in general position in the linear space generated by $C$ is open and non--empty. 

We have a well--defined forgetful dominant morphism $\mathcal H_S\to \vert \cO_S(h)\vert$ whose fibre over $C$ is an open subset of the $(N+1)$--symmetric product of $C$. In particular $\mathcal H_S^U$ is irreducible of dimension $2N+1$. Let $(Z,C)$ represent a point of $\mathcal H_S^U$: the Ulrich bundles associated to such a point via the construction described in Theorem \ref{tExistence} correspond to the sections of
$$
\mathrm{Ext}^1_S\big(\cI_{Z\vert S}(h-K_S),\cO\big)\cong H^1\big(S,\cI_{Z\vert S}(h)\big)^\vee.
$$
By definition of $\mathcal H_S^U$, we have $h^0\big(C,\cI_{Z\vert C}(h)\big)=0$, hence the cohomology of the exact sequence
$$
0\longrightarrow \cI_{Z\vert C}(h)\longrightarrow  \cO_C(h)\longrightarrow  \cO_Z(h)\longrightarrow 0
$$
and the Riemann--Roch theorem for $\cO_C(h)$ yield $h^1\big(C,\cI_{Z\vert C}(h)\big)=\deg(Z)-\chi(\cO_C(h))=0$. Sequence \eqref{seqIdeals},  the isomorphism $\cI_{C\vert S}\cong\cO_S(-h)$ and the hypothesis $q(S)=p_g(S)=0$ finally return $h^1\big(S,\cI_{Z\vert S}(h)\big)=1$. Thus we have a family $\frak E$ of Ulrich bundles of rank $2$ with Chern classes $c_1$ and $c_2$ parameterized by $\mathcal H_S^U$. 

If $\pi(\cO_S(h))\ge2$, then the bundles in the family are also stable for a general choice of $Z$. Since stability is an open property in a flat family (see \cite{H--L}, Proposition 2.3.1 and Corollary 1.5.11), it follows the existence of an irreducible open subset $\mathcal H_S^{s,U}\subseteq \mathcal H_S^U\subseteq \mathcal H_S$  of points  corresponding to stable bundles. 

Thus, we have a morphism $\mathcal H_S^{s,U}\to \cM_S^{s,U}(2;c_1,c_2)$ whose image parameterizes the isomorphism classes of stable bundles constructed in Theorem \ref{tExistence}. In particular such bundles, correspond to the points of a single irreducible component $\cU_S(\eta)\subseteq\cM_S^{s,U}(2;c_1,c_2)$.

Theorems 4.5.4 and 4.5.8 of \cite{H--L} imply that $\dim(\cU_S(\eta))\ge4c_2-c_1^2-3\chi(\cO_S)$. Taking into account the definitions of $c_1$ and $c_2$, simple computations finally yield $\dim(\cU_S(\eta))\ge h^2-K_S^2$.
\end{proof}

If we have some extra informations on the surface $S$, then we can describe $\cU_S(\eta)$ as the following proposition shows.

\begin{proposition}
\label{pAnticanonicalStable}
Let $S$ be an anticanonical surface with $p_g(S)=0$, $q(S)=1$ and endowed with a very ample line bundle $\cO_S(h)$.

If $\pi(\cO_S(h))\ge2$, then $\cU_S(\eta)$ is non--rational and generically smooth of dimension $h^2-K_S^2$.
\end{proposition}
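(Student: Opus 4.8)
For the local part (generic smoothness and the dimension), the plan is to show that every stable Ulrich bundle $\cE$ on $S$ is unobstructed. By Serre duality $\ext^2_S(\cE,\cE)\cong\mathrm{Hom}_S\big(\cE,\cE(K_S)\big)^\vee$, so I would analyse homomorphisms $\cE\to\cE(K_S)$. Since $\cO_S(h)$ is ample and $S$ is anticanonical we have $hK_S<0$, hence $\mu(\cE(K_S))=\mu(\cE)+hK_S<\mu(\cE)$. As $\cE$ and $\cE(K_S)$ are stable of equal rank, a nonzero homomorphism between them would force $\mu(\cE)\le\mu(\cE(K_S))$, a contradiction; thus $\ext^2_S(\cE,\cE)=0$. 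Because $p_g(S)=0$ the trace summand $H^2(S,\cO_S)$ vanishes, so the obstruction space $\ext^2_S(\cE,\cE)_0$ also vanishes and $\cM_S^{s,U}(2;c_1,c_2)$ is smooth at $[\cE]$. Since $hK_S<0$ holds for every point of $\cU_S(\eta)$, the component is smooth, in particular generically smooth. Combined with Proposition \ref{pComponent}, smoothness forces the dimension to equal the expected value: a Riemann--Roch computation of $\chi(\cE,\cE)$ together with $\chi(\cO_S)=0$ gives $\dim\cU_S(\eta)=4c_2-c_1^2=h^2-K_S^2$.

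For the non-rationality, the strategy is to produce a dominant map from $\cU_S(\eta)$ onto the elliptic curve $\Alb(S)$; since a rational variety is rationally connected and so admits no nonconstant morphism to a curve of positive genus, this will finish the proof. Let $a\colon S\to\Alb(S)$ be the Albanese morphism and consider the summation morphism $\sigma\colon\mathcal H_S^{U}\to\Alb(S)$ sending $(Z,C)$ to the image of $Z$ under $\mathrm{Sym}^{N+1}(S)\to\mathrm{Sym}^{N+1}(\Alb(S))\to\Alb(S)$, the last arrow being the group law. Fixing a general smooth $C$, the restriction $a\vert_C\colon C\to\Alb(S)$ is a finite cover (an ample $C$ cannot be contracted to a point of $\Alb(S)$, as its self-intersection is positive), and $\sigma$ factors on this fibre through the Abel--Jacobi map $\mathrm{Sym}^{N+1}(C)\to\Pic^{N+1}(C)$, which is surjective since $N+1\ge\pi(\cO_S(h))=g(C)$, followed by the norm map of $a\vert_C$; as norm maps are surjective on degree--$0$ parts, $\sigma$ is already surjective over $C$, hence dominant.

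It then remains to descend $\sigma$ along the surjection $\phi\colon\mathcal H_S^{s,U}\to\cU_S(\eta)$. Over a general $[\cE]$ the fibre of $\phi$ is identified, via $s\mapsto(Z(s),C)$, with an open subset of $\p{m}$ where $\p{m}=\p\big(H^0(S,\cE(-h-K_S-\eta))\big)$: the subscheme $Z(s)$ is the zero locus of the section $s$ of the rank $2$ bundle $\cE(-h-K_S-\eta)$ of second Chern class $N+1$, and $C$ is the unique member of $\vert h\vert$ through $Z$ because $h^0\big(S,\cI_{Z\vert S}(h)\big)=1$. This fibre is rational, so its image in $\Alb(S)$ under $\sigma$ is a single point; hence $\sigma$ is constant on the fibres of $\phi$ and descends to a dominant rational map $\bar\sigma\colon\cU_S(\eta)\to\Alb(S)$ with $\sigma=\bar\sigma\circ\phi$. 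Dominance of $\bar\sigma$ onto a curve of genus $1$ shows $\cU_S(\eta)$ is non-rational.

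The main obstacle I expect is precisely this descent step. One must check that every pair $(Z,C)$ mapping to a fixed bundle $\cE$ genuinely arises from a section of $\cE(-h-K_S-\eta)$, so that the fibres of $\phi$ are exactly (open subsets of) projective spaces and therefore rationally connected; this requires combining the uniqueness of $C$ through $Z$ with the explicit extension data of Theorem \ref{tExistence} and the flag-scheme bookkeeping of Proposition \ref{pComponent}. Only after this identification is it legitimate to invoke rigidity of maps to the abelian variety $\Alb(S)$ to conclude that $\sigma$ factors through $\cU_S(\eta)$.
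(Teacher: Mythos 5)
Your proposal is correct in substance, but it takes a genuinely different route from the paper on both halves of the statement. For generic smoothness the paper reaches the same key vanishing $h^2\big(S,\cE\otimes\cE^\vee\big)=h^0\big(S,\cE\otimes\cE^\vee(K_S)\big)=0$, but by a different mechanism: it restricts endomorphisms to an effective anticanonical divisor $A\in\vert-K_S\vert$ and uses simplicity of $\cE$ to see that $H^0\big(S,\cE\otimes\cE^\vee\big)\to H^0\big(A,\cE\otimes\cE^\vee\otimes\cO_A\big)$ is injective, so the kernel $H^0\big(S,\cE\otimes\cE^\vee(K_S)\big)$ must vanish. Your slope argument --- $\mu(\cE(K_S))<\mu(\cE)$ because $hK_S<0$, together with the fact that there are no nonzero maps between $\mu$--semistable sheaves of strictly decreasing slope, $\mu$--stability of $\cE$ coming from Theorem \ref{tUnstable} --- gives the same conclusion and is, if anything, more robust: it uses only $hK_S<0$ rather than effectivity of $-K_S$, and it applies verbatim at every point of the stable locus. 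For non--rationality the two proofs genuinely diverge: the paper disposes of it in one line by citing \cite{B--C} for the irregularity of the moduli space when $q(S)=1$, whereas you build a dominant rational map $\cU_S(\eta)\dashrightarrow\Alb(S)$ by descending the sum--of--points map $\sigma$ along $\phi\colon\mathcal H_S^{s,U}\to\cU_S(\eta)$, using that the fibres of $\phi$ are open subsets of projective spaces and hence admit only constant maps to an abelian variety. Your dominance argument is sound, and it is worth noting that it also uses anticanonicity: surjectivity of the Abel--Jacobi map needs $N+1\ge\pi(\cO_S(h))$, i.e. $-hK_S-1\ge0$, and the pushforward $\Pic^{N+1}(C)\to\Alb(S)$ is surjective for the finite cover $a\vert_C$. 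In short, the paper's proof is shorter but outsources the key point to a citation; yours is self--contained and exhibits the geometric source of the irregularity.

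Two caveats, neither fatal. First, the fibre of $\phi$ over $[\cE]$ need not be dense in $\p{m}=\bP\big(H^0(S,\cF)\big)$, where $\cF:=\cE(-h-K_S-\eta)$: twisting the Koszul sequence $0\to\cO_S\to\cF\to\cI_{Z(s)\vert S}(h-K_S)\to0$ by $\cO_S(K_S)$ gives $h^0\big(S,\cI_{Z(s)\vert S}(h)\big)=h^0\big(S,\cF(K_S)\big)+\dim\ker\big(\cdot s\colon H^1(S,\cO_S(K_S))\to H^1(S,\cF(K_S))\big)$, so when $h^0\big(S,\cF(K_S)\big)=0$ the sections occurring in the fibre satisfy the linear condition $s\cdot\alpha=0$ for a generator $\alpha$ of the one--dimensional space $H^1\big(S,\cO_S(K_S)\big)$; the fibre is then only an open subset of a linear subspace of $\p{m}$. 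This does not hurt you: all your argument needs is that each fibre is an open subset of \emph{some} projective space. Second, the dimension count: Riemann--Roch gives $\chi(\cE,\cE)=-(4c_2-c_1^2)$, hence $\mathrm{ext}^1(\cE,\cE)=1+h^2-K_S^2$ once $\mathrm{ext}^2(\cE,\cE)=0$ and $\mathrm{hom}(\cE,\cE)=1$; the stated dimension $h^2-K_S^2$ is the traceless quantity $\mathrm{ext}^1(\cE,\cE)-q(S)$, i.e. the tangent space of the fixed--determinant moduli space in which $\cU_S(\eta)$ naturally sits (the determinant being pinned to $3h+K_S+2\eta$). The paper elides exactly the same point by citing \cite{H--L}, so your count agrees with the paper's, but the $q(S)$--shift deserves to be made explicit.
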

\begin{proof}
Thanks to Corollary \ref{cAnticanonical} we know that $\cO_S(h)$ is non--special. Let $A\in\vert-K_S\vert$: the cohomology of 
$$
0\longrightarrow\cO_S(K_S)\longrightarrow\cO_S\longrightarrow\cO_A\longrightarrow0
$$
tensored with $\cE\otimes\cE^\vee$ yields the exact sequence
$$
0\longrightarrow H^0\big(S,\cE\otimes\cE^\vee(K_S)\big)\longrightarrow H^0\big(S,\cE\otimes\cE^\vee\big)\longrightarrow H^0\big(A,\cE\otimes\cE^\vee\otimes\cO_A\big).
$$
Since $\cE$ is stable (see Theorem \ref{tStable}), then it is simple, i.e. $h^0\big(S,\cE\otimes\cE^\vee\big)=1$ (see \cite{H--L}, Corollary 1.2.8), hence the map
$$
H^0\big(S,\cE\otimes\cE^\vee\big)\longrightarrow H^0\big(A,\cE\otimes\cE^\vee\otimes\cO_A\big)
$$
is injective. We deduce that $h^2\big(S,\cE\otimes\cE^\vee\big)=h^0\big(S,\cE\otimes\cE^\vee(K_S)\big)=0$.

Thus $\cE$ corresponds to a smooth point of $\cU_S(\eta)$ and $\dim(\cU_S(\eta))=h^2-K_S^2$,  thanks to Corollary 4.5.2 of \cite{H--L}. Finally, being $q(S)=1$, then $\cU_S(\eta)$ is irregular thanks to \cite{B--C} as well.
\end{proof}

Remark \ref{rElliptic} and the above proposition yield the following corollary.

\begin{corollary}
\label{cEllipticStable}
Let $S$ be a geometrically ruled surface with $q(S)=1$, $e\ge0$ and endowed with a very ample line bundle $\cO_S(h)$.

If $\pi(\cO_S(h))\ge2$, then $\cU_S(\eta)$ is non--rational and generically smooth of dimension $h^2$.
\end{corollary}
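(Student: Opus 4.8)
The plan is to derive Corollary \ref{cEllipticStable} as a direct specialization of Proposition \ref{pAnticanonicalStable}, using Remark \ref{rElliptic} to verify that the geometric hypotheses of the proposition are met and to simplify the numerical conclusion. The two ingredients I need are: that a geometrically ruled surface $S$ over an elliptic curve with $e\ge0$ is anticanonical (so that Proposition \ref{pAnticanonicalStable} applies verbatim), and that $K_S^2=0$ for such a surface (so that the dimension formula $h^2-K_S^2$ collapses to $h^2$).

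First I would invoke Remark \ref{rElliptic} to establish that $S$ is anticanonical precisely when $e\ge0$. Indeed, the remark concludes with exactly this equivalence: the computation of $h^0\big(S,\cO_S(-K_S)\big)$ through Sequences \eqref{seqElliptic} and \eqref{seqEllipticSymmetric} shows that $\vert-K_S\vert\ne\emptyset$ if and only if $e\ge0$. Since $S$ is geometrically ruled over the elliptic curve $E$, we have $p_g(S)=0$ and $q(S)=1$ automatically, so all the hypotheses of Proposition \ref{pAnticanonicalStable} are satisfied once $e\ge0$ is assumed. In particular $\cO_S(h)$ is non--special by Corollary \ref{cElliptic}, and the assumption $\pi(\cO_S(h))\ge2$ carries over unchanged.

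Next I would compute $K_S^2$ using the numerical description in Remark \ref{rElliptic}. We have $K_S=-2\xi+p^*\frak h$ with $e=-\deg(\frak h)$. The standard intersection relations on a geometrically ruled surface (see \cite{Ha2}, Chapter V.2) give $\xi^2=\deg(\frak h)=-e$, $\xi\cdot p^*\frak h=\deg(\frak h)=-e$, and $(p^*\frak h)^2=0$, since any two fibres of $p$ meet trivially. Hence
$$
K_S^2=(-2\xi+p^*\frak h)^2=4\xi^2-4\,\xi\cdot p^*\frak h+(p^*\frak h)^2=4(-e)-4(-e)+0=0.
$$
Substituting $K_S^2=0$ into the conclusion of Proposition \ref{pAnticanonicalStable} immediately yields that $\cU_S(\eta)$ is generically smooth of dimension $h^2-K_S^2=h^2$.

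Finally, the non--rationality of $\cU_S(\eta)$ transfers directly: Proposition \ref{pAnticanonicalStable} already asserts that for an anticanonical surface with $q(S)=1$ the component is non--rational (indeed irregular, via \cite{B--C}), and since $S$ here is such a surface, the same conclusion holds. I do not anticipate a genuine obstacle in this argument; the only point requiring care is the intersection-theoretic computation of $K_S^2$, where one must use the correct self-intersection $\xi^2=-e$ dictated by the normalization $e=-\deg(\frak h)$ fixed in Remark \ref{rElliptic}, rather than an opposite sign convention. Everything else is a clean invocation of the preceding results.
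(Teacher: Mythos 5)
Your proposal is correct and follows exactly the route the paper intends: the paper derives this corollary directly from Remark \ref{rElliptic} (which shows $e\ge0$ makes $S$ anticanonical) combined with Proposition \ref{pAnticanonicalStable}, with the dimension collapsing to $h^2$ because $K_S^2=0$ for a geometrically ruled surface over an elliptic curve. Your explicit intersection computation $K_S^2=4\xi^2-4\,\xi\cdot p^*\frak h=0$ is accurate and merely spells out what the paper leaves implicit.
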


\section{Ulrich--wildness}
\label{sWild}
Let $S$ be a surface with $p_g(S)=0$ and $q(S)=1$. Moreover $\pi(\cO_S(h))\ge 1$ because $S$ is not rational, as pointed out in the introduction.

We will make use of the following result.

\begin{theorem}
\label{tFPL}
Let $X$ be a smooth variety endowed with a very ample line bundle $\cO_X(h)$.

If $\cA$ and $\cB$ are simple Ulrich bundles on $X$ such that $h^1\big(X,\cA\otimes\cB^\vee\big)\ge3$ and $h^0\big(X,\cA\otimes\cB^\vee\big)=h^0\big(X,\cB\otimes\cA^\vee\big)=0$, then $X$ is Ulrich--wild.
\end{theorem}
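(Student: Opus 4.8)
The plan is to deduce Ulrich--wildness from the wild representation type of a generalised Kronecker quiver, by realising its representations as extensions built out of $\cA$ and $\cB$. The engine of the argument is that the Ulrich bundles form an extension--closed subcategory of the coherent sheaves on $X$: if $0\to\cG'\to\cG\to\cG''\to0$ is exact with $\cG'$ and $\cG''$ Ulrich, then twisting by $\cO_X(-ih)$ and by $\cO_X(-(j+1)h)$ and reading off the long exact cohomology sequences shows that the defining vanishings $h^i\big(X,\cG(-ih)\big)=h^j\big(X,\cG(-(j+1)h)\big)=0$ are inherited by $\cG$; since an extension of locally free sheaves is locally free, $\cG$ is again an Ulrich bundle.

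First I would rephrase the hypotheses homologically. For vector bundles on the smooth variety $X$ one has $\mathrm{Hom}(\cB,\cA)\cong H^0\big(X,\cA\otimes\cB^\vee\big)$, $\mathrm{Hom}(\cA,\cB)\cong H^0\big(X,\cB\otimes\cA^\vee\big)$ and $\mathrm{Ext}^1(\cB,\cA)\cong H^1\big(X,\cA\otimes\cB^\vee\big)$. Thus the hypotheses say precisely that $\mathrm{Hom}(\cA,\cB)=\mathrm{Hom}(\cB,\cA)=0$, that $\mathrm{End}(\cA)=\mathrm{End}(\cB)=k$ (simplicity), and that $e:=\dim_k\mathrm{Ext}^1(\cB,\cA)\ge3$. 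Fixing a basis $\varepsilon_1,\dots,\varepsilon_e$ of $\mathrm{Ext}^1(\cB,\cA)$, to each finite--dimensional representation $M=(k^a,k^b,\varphi_1,\dots,\varphi_e)$ of the generalised Kronecker quiver $Q$ with two vertices and $e$ arrows, with $\varphi_i\colon k^b\to k^a$, I associate the extension class $\xi_M=\sum_i\varphi_i\otimes\varepsilon_i\in\mathrm{Hom}(k^b,k^a)\otimes\mathrm{Ext}^1(\cB,\cA)\cong\mathrm{Ext}^1(\cB^{\oplus b},\cA^{\oplus a})$ and the resulting Ulrich bundle $\cF_M$ sitting in $0\to\cA^{\oplus a}\to\cF_M\to\cB^{\oplus b}\to0$. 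This defines a functor $\Phi$ from representations of $Q$ to Ulrich bundles on $X$.

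The heart of the proof, and the step I expect to be the main obstacle, is to show that $\Phi$ is a representation embedding, i.e. fully faithful, so that it preserves indecomposability and reflects isomorphism classes. Given $M,M'$ with their defining sequences, I would apply $\mathrm{Hom}(\cF_M,-)$ and $\mathrm{Hom}(-,\cF_{M'})$ to these sequences. The vanishing of the cross terms $\mathrm{Hom}(\cA^{\oplus a},\cB^{\oplus b'})$ and $\mathrm{Hom}(\cB^{\oplus b},\cA^{\oplus a'})$ forces every $\cF_M\to\cF_{M'}$ to carry $\cA^{\oplus a}$ into $\cA^{\oplus a'}$ and hence, using $\mathrm{End}(\cA)=\mathrm{End}(\cB)=k$, to induce a pair of linear maps $f\colon k^a\to k^{a'}$ and $g\colon k^b\to k^{b'}$; compatibility with the extensions translates into the relation $f\,\xi_M=\xi_{M'}\,g$, which is exactly the condition that $(f,g)$ be a morphism of $Q$--representations. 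This yields a natural isomorphism $\mathrm{Hom}(\cF_M,\cF_{M'})\cong\mathrm{Hom}_Q(M,M')$; in particular $\mathrm{End}(\cF_M)\cong\mathrm{End}_Q(M)$, so $\cF_M$ is indecomposable exactly when $M$ is, and $\cF_M\cong\cF_{M'}$ forces $M\cong M'$. The delicate point is to verify that the connecting maps in the $\mathrm{Ext}$--long exact sequences contribute nothing beyond the quiver relation, which is where the full strength of all three hypotheses is used.

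Finally I would invoke the classical fact that the generalised Kronecker quiver $Q$ with $e\ge3$ arrows is of wild representation type: for every $p$ it carries an algebraic family, of dimension $p$, of pairwise non--isomorphic indecomposable finite--dimensional representations. Applying the functor $\Phi$ to such a family, and noting that the extension construction is algebraic and so produces a flat family of bundles, I obtain a family of dimension $p$ of pairwise non--isomorphic indecomposable Ulrich bundles on $X$. Since $p$ is arbitrary, $X$ is Ulrich--wild.
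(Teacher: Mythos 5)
Your proposal is correct, but note that the paper does not prove Theorem \ref{tFPL} at all: it quotes it as a black box from \cite{F--PL} (Theorem 1 and Corollary 1), and your argument --- realising representations of the $e$--arrow Kronecker quiver, $e=\dim_k\mathrm{Ext}^1(\cB,\cA)\ge3$, as extensions $0\to\cA^{\oplus a}\to\cF_M\to\cB^{\oplus b}\to0$ inside the extension--closed category of Ulrich bundles, and checking this is a representation embedding --- is precisely the method of that cited reference. The step you flag as delicate does go through: a map $\cF_M\to\cF_{M'}$ carries $\cA^{\oplus a}$ into $\cA^{\oplus a'}$ since $\mathrm{Hom}(\cA,\cB)=0$, the induced pair $(f,g)$ determines the map uniquely because the ambiguity is a torsor under $\mathrm{Hom}\big(\cB^{\oplus b},\cA^{\oplus a'}\big)=0$, and the compatibility $f_*\xi_M=g^*\xi_{M'}$ unwinds, via simplicity and the linear independence of the $\varepsilon_i$, to exactly the quiver--morphism relation, so $\mathrm{Hom}(\cF_M,\cF_{M'})\cong\mathrm{Hom}_Q(M,M')$ as you claim.
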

\begin{proof}
See \cite{F--PL}, Theorem 1 and Corollary 1.
\end{proof}

An immediate consequence of the above Theorem is the proof of Theorem \ref{tWild}.

\medbreak
\noindent{\it Proof of Theorem \ref{tWild}.}
Recall that $S$ is a surface with $p_g(S)=0$, $q(S)=1$ and endowed with a very ample non--special line bundle $\cO_S(h)$. We have $\pi(\cO_S(h))\ge1$, $\chi(\cO_S)=0$ and $K_S^2\le0$.

If $\pi(\cO_S(h))\ge2$, then Theorems \ref{tExistence} and \ref{tStable} yield the existence of a stable special Ulrich bundle $\cE$ of rank $2$ on $S$. 

The local dimension of $\cM_S^{s}(2;c_1,c_2)$ at the point corresponding to $\cE$ is at least $4c_2-c_1^2=h^2-K_S^2\ge1$. Thus, there exists a second stable Ulrich bundle $\cG\not\cong\cE$ of rank $2$ with $c_i(\cG)=c_i$, for $i=1,2$. Both $\cE$ and $\cG$, being stable, are simple (see \cite{H--L}, Corollary 1.2.8).

Due to Proposition 1.2.7 of \cite{H--L} we have $h^0\big(F,\cE\otimes\cG^\vee\big)=h^0\big(F,\cG\otimes\cE^\vee\big)=0$, thus
$$
h^1\big(F,\cE\otimes\cG^\vee\big)=h^2\big(F,\cE\otimes\cG^\vee\big)-\chi(\cE\otimes\cG^\vee)\ge -\chi(\cE\otimes\cG^\vee).
$$
Equality \eqref{RRGeneral} with $\cF:=\cE\otimes\cG^\vee$ and the equalities $\rk(\cE\otimes\cG^\vee)=4$, $c_1(\cE\otimes\cG^\vee)=0$ and $c_2(\cE\otimes\cG^\vee)=4c_2-c_1^2$ imply
$$
h^1\big(F,\cE\otimes\cG^\vee\big)\ge 4c_2-c_1^2=h^2-K_S^2\ge3.
$$
because surfaces of degree up to $2$ are rational. We conclude that $S$ is Ulrich--wild, by Theorem \ref{tFPL}.

Finally let $\pi(\cO_S(h))=1$. In this case, $S$ is a geometrically ruled surface on an elliptic curve $E$ thanks to Theorem A of \cite{A--S} embedded as a scroll bay $\cO_S(h)$. Using the notations of Remark \ref{rElliptic}  we can thus assume that $\cO_S(h)=\cO_S(\xi+p^*\frak{b})$, where $\deg(\frak b)\ge e+3$.

Assertion 2) of Proposition 5 in \cite{Bea3} yields that for each $\vartheta\in\Pic^0(E)\setminus\{\ \cO_E\ \}$ the line bundle $\mathcal L:=\cO_S(h+p^*\vartheta)\cong \cO_S(\xi+p^*\frak{b}+p^*\vartheta)$ is Ulrich. It follows from Corollary \ref{cUlrichLine} that $\cM:=\cO_S(2h+K_S-p^*\vartheta)\cong p^*\cO_E(2\frak b+\frak h-\vartheta)$ is Ulrich too. 

Trivially, such bundles are simple and $h^0\big(S,\mathcal L\otimes\cM^\vee\big)=h^0\big(S,\cM\otimes\mathcal L^\vee\big)=0$ because $\mathcal L\not\cong\cM$. Since $\mathcal L\otimes\cM^\vee\cong\cO_S(\xi-p^*\frak b-p^*\frak h+2\vartheta)$ and $e=-\deg(\frak h)\ge-1$, it follows from Equality \eqref{RRGeneral} that
$$
h^1\big(S,\mathcal L\otimes\cM^\vee\big)\ge-\chi(\mathcal L\otimes\cM^\vee)=2\deg(\frak b)-e\ge e+6\ge5.
$$
The statement thus again follows from Theorem \ref{tFPL}.
\qed
\medbreak

The following consequence of the above theorem is immediate, thanks to Corollaries \ref{cBielliptic}, \ref{cAnticanonical}, \ref{cElliptic}.

\begin{corollary}
Let $S$ be a surface endowed with a very ample line bundle $\cO_S(h)$.

If $S$ is either bielliptic, or anticanonical with $q(S)=1$, or geometrically ruled with $q(S)=1$, then it is Ulrich--wild.
\end{corollary}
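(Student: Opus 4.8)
The plan is to recognize this corollary as a pure assembly result. Theorem \ref{tWild} already delivers Ulrich--wildness for every surface $S$ with $p_g(S)=0$, $q(S)=1$ carrying a very ample \emph{non--special} line bundle $\cO_S(h)$, so all that remains is to check that each of the three listed families falls under that hypothesis. I would therefore organize the proof around verifying the three conditions—$p_g(S)=0$, $q(S)=1$, and non--speciality of $\cO_S(h)$—separately in each case, and then invoking Theorem \ref{tWild}.

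First I would dispose of the invariants. If $S$ is bielliptic, then by definition $p_g(S)=0$, $q(S)=1$ (and $\kappa(S)=0$), so both irregularity conditions hold with no further assumption. If $S$ is anticanonical, then $\vert -K_S\vert\ne\emptyset$ forces $p_g(S)=0$, exactly as recorded in the discussion preceding Corollary \ref{cAnticanonical}, while $q(S)=1$ is assumed outright. If $S$ is geometrically ruled with $q(S)=1$, then the ruled structure gives $p_g(S)=0$ and identifies $q(S)$ with the genus of the base curve, so the base is elliptic and $q(S)=1$ holds. In every case the pair $(p_g(S),q(S))=(0,1)$ is in force.

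Next I would supply non--speciality of $\cO_S(h)$, which is the one genuinely cohomological input; here there is nothing new to establish. Corollary \ref{cBielliptic}, Corollary \ref{cAnticanonical}, and Corollary \ref{cElliptic} each assert, for the corresponding class, that every very ample $\cO_S(h)$ is automatically non--special (indeed each even produces a special rank $2$ Ulrich bundle). Quoting the appropriate one of these three results in each case closes the verification. With $p_g(S)=0$, $q(S)=1$, and $\cO_S(h)$ very ample and non--special all in hand, Theorem \ref{tWild} applies verbatim and yields that $S$ is Ulrich--wild.

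I do not anticipate any genuine obstacle: the content of the corollary is entirely contained in the three auxiliary corollaries together with Theorem \ref{tWild}, and the argument is bookkeeping. The only point demanding a little care is that the three cases are triggered by different auxiliary results and that for bielliptic surfaces the equalities $p_g(S)=0$ and $q(S)=1$ are built into the definition rather than hypothesized, so no extra assumption on $q(S)$ is needed there—this is why the bielliptic case appears without the qualifier ``with $q(S)=1$'' attached to the other two.
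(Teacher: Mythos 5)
Your proposal is correct and is exactly the paper's argument: the paper states the corollary is immediate from Theorem \ref{tWild} together with Corollaries \ref{cBielliptic}, \ref{cAnticanonical}, and \ref{cElliptic}, which is precisely the assembly you carry out (checking $p_g(S)=0$, $q(S)=1$ in each case and quoting the appropriate corollary for non--speciality of $\cO_S(h)$). Nothing is missing; your write-up just makes explicit the bookkeeping the paper leaves to the reader.
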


The following corollary strengthens the second part of the statements of Theorems 4.13 and 4.18 in \cite{MR--PL1}.

\begin{corollary}
Let $S\subseteq\p4$ be a non--degenerate linearly normal non--special surface of degree at least $4$ with $p_g(S)=0$.
Then $S$ is Ulrich--wild.
\end{corollary}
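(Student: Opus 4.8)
The plan is to argue by a dichotomy on the irregularity $q(S)$. First I would recall, exactly as in the proof of Corollary \ref{cFourspace}, that the classification of non--degenerate non--special surfaces $S\subseteq\p4$ with $p_g(S)=0$ obtained in \cite{I--M} and \cite{M--R} forces $q(S)\le1$. Since $S\subseteq\p4$ is embedded, its hyperplane polarisation $\cO_S(h)=\cO_{\p4}(1)\otimes\cO_S$ is automatically very ample, it is non--special by hypothesis, and $p_g(S)=0$; these are precisely the standing hypotheses of the results proved above. Thus the statement splits into the two cases $q(S)=1$ and $q(S)=0$, and I would use the degree assumption $h^2\ge4$ to discard the single exceptional surface of degree $3$ (the cubic scroll) that otherwise sits at the bottom of the admissible range $3\le h^2\le10$.

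For the case $q(S)=1$ I would simply invoke Theorem \ref{tWild}: the surface has $p_g(S)=0$, $q(S)=1$ and carries the very ample non--special line bundle $\cO_S(h)$, hence it is Ulrich--wild with no further work. Concretely, the classification recalled above shows that such an $S$ is either a quintic elliptic scroll or a Serrano surface, whose non--specialness and structure are already pinned down by Corollaries \ref{cElliptic} and \ref{cBielliptic}; this is the genuinely new input, and it is the reason the corollary improves on what was previously known, since \cite{MR--PL1} handled only the regular surfaces.

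For the case $q(S)=0$ the surface is one of the rational--type surfaces of degree $4\le h^2\le10$ listed in \cite{I--M} and \cite{M--R}. Here I would appeal to \cite{MR--PL1}, whose Theorems 4.13 and 4.18 treat exactly the non--special surfaces in $\p4$ with $q(S)=0$: their second parts, together with the rank $2$ Ulrich bundles already furnished by Corollary \ref{cFourspace}, yield Ulrich--wildness throughout the required degree range. Combining the two cases gives the assertion for every degree $h^2\ge4$.

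The step I expect to be the main obstacle is bookkeeping rather than a substantial new idea: one has to match the hypotheses across the two classifications, checking that the bound $h^2\ge4$ selects precisely the admissible surfaces in each case, that the non--special hypothesis propagates so that Theorem \ref{tWild} applies verbatim when $q(S)=1$, and that every surface arising in the $q(S)=0$ case genuinely falls within the scope of the cited results of \cite{MR--PL1}. No construction of Ulrich bundles beyond those already produced in the paper is needed.
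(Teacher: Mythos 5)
Your dichotomy on $q(S)$ and your treatment of the irregular case coincide with the paper's own proof: the classification in \cite{I--M} and \cite{M--R} gives $q(S)\le1$, the only surfaces with $q(S)=1$ are the quintic elliptic scrolls and the Serrano bielliptic surfaces, and Theorem \ref{tWild} then applies verbatim (your exclusion of the degree $3$ cubic scroll is also the right reading of the hypothesis $h^2\ge4$). The genuine gap is in the case $q(S)=0$. You quote Theorems 4.13 and 4.18 of \cite{MR--PL1} as if their second parts already gave Ulrich--wildness, but the sentence in the paper immediately preceding this corollary states that the corollary \emph{strengthens} precisely those second parts: what they establish is wild representation type, i.e.\ large families of pairwise non--isomorphic aCM bundles, not families of \emph{Ulrich} bundles. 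So citing them yields a strictly weaker conclusion than the one to be proved, and your proposed repair --- combining their wildness with the single rank $2$ Ulrich bundle of Corollary \ref{cFourspace} --- has no mechanism behind it: to deduce Ulrich--wildness via Theorem \ref{tFPL} one needs \emph{two} simple Ulrich bundles $\cA$, $\cB$ with $h^0\big(S,\cA\otimes\cB^\vee\big)=h^0\big(S,\cB\otimes\cA^\vee\big)=0$ and $h^1\big(S,\cA\otimes\cB^\vee\big)\ge3$, and ``CM--wild plus one Ulrich bundle'' produces no such pair.

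There is also a coverage problem: \cite{MR--PL1} deals with rational \emph{ACM} surfaces in $\p4$ (cubic scroll, quartic Del Pezzo, quintic Castelnuovo, sextic Bordiga), so its theorems do not even apply to the non--special linearly normal rational surfaces of degree $7$, $8$, $9$ occurring in the classification of \cite{I--M}, which are not ACM; your claim that those theorems ``treat exactly the non--special surfaces in $\p4$ with $q(S)=0$'' is not correct. The paper settles the whole case $q(S)=0$ differently, by invoking Section 5 of \cite{Cs}, where Ulrich--wildness is proved for all non--special surfaces with $p_g=q=0$ of degree at least $4$ (this is also where the hypothesis $h^2\ge4$ is genuinely used). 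Replacing your appeal to \cite{MR--PL1} with that reference --- or with an argument along the lines of the proof of Theorem \ref{tWild} run on the special rank $2$ Ulrich bundles of \cite{Cs} --- closes the gap.
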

\begin{proof}
As pointed out in the proof of Corollary \ref{cFourspace} the surface $S$ satisfies $q(S)\le1$ and if equality holds it is either an elliptic scroll or a bielliptic surface. Theorem \ref{tWild} above and Section 5 of \cite{Cs} yields that $S$ is Ulrich--wild.
\end{proof}

\bigskip
\noindent
Gianfranco Casnati,\\
Dipartimento di Scienze Matematiche, Politecnico di Torino,\\
c.so Duca degli Abruzzi 24, 10129 Torino, Italy\\
e-mail: {\tt gianfranco.casnati@polito.it}


\begin{thebibliography}{44}

\bibitem{A--S}
M. Andreatta, A. Sommese: \emph{Classification of irreducible projective surfaces of smooth sectional genus $\le3$}.
Math. Scand. \textbf{67} (1990), 197--214. 


\bibitem{A--C--MR}
 M. Aprodu, L. Costa, R.M. Mir\'{o}--Roig: \emph{Ulrich bundles on ruled surfaces}. J. Pure Appl. Algebra, \textbf{222} (2018),131--138.

\bibitem{A--F--O}
 M. Aprodu, G. Farkas, A. Ortega: \emph{Minimal resolutions, Chow forms and Ulrich bundles on $K3$ surfaces}. J. Reine Angew. Math. \textbf{730} (2017), 225--249.

\bibitem{At}
M.F. Atiyah: \emph{Vector bundles over an elliptic curve}. Proc. London Math. Soc. \textbf{7} (1957), 414--452.

\bibitem{B--C}
E. Ballico, L. Chiantini: \emph{Some properties of stable rank $2$ bundles on algebraic surfaces}. Forum Math. \textbf{4} (1992), 417--424. 

\bibitem{BeaBook}
A. Beauville: \emph{Complex Algebraic Surfaces}. L.M.S. Student Texts \textbf{34}, (1996).

\bibitem{Bea}
  A. Beauville: \emph{Determinantal hypersurfaces}.  Michigan Math. J. \textbf{48} \rm(2000), 39--64.

\bibitem{Bea1}
A. Beauville: \emph{Ulrich bundles on abelian surfaces}.  Proc. Amer. Math. Soc. \textbf{144} (2016), 4609--4611.
  
\bibitem{Bea2}
A. Beauville: \emph{Ulrich bundles on surfaces with $p_g=q=0$}. Available at arXiv:1607.00895 [math.AG].
  
\bibitem{Bea3}
A. Beauville: \emph{An introduction to Ulrich bundles}. Available at arXiv:1610.02771 [math.AG].

\bibitem{B--N}
L. Borisov, H. Nuer: \emph{Ulrich bundles on Enriques surfaces}. Available at arXiv:1606.01459 [math.AG].
 
\bibitem{C--H1}
  M. Casanellas, R. Hartshorne: \emph{ACM bundles on cubic surfaces}. J. Eur. Math. Soc. \textbf{13} \rm (2011), 709--731.

\bibitem{C--H2}
  M. Casanellas, R. Hartshorne, F. Geiss, F.O. Schreyer: \emph{Stable Ulrich bundles}. Int. J. of Math. \text{23} 1250083 \rm(2012).

\bibitem{Cs}
G. Casnati: \emph{Special Ulrich bundles on non--special surfaces with $p_g=q=0$}.  Int. J. Math. \textbf{28} (2017), 1750061.

\bibitem{C--G}
G. Casnati, F. Galluzzi: \emph{Stability of rank $2$ Ulrich bundles on projective $K3$ surface}. Available at arXiv:1607.05469  [math.AG]. To appear in Math. Scand..

\bibitem{Work}
G. Casnati: \emph{On the existence of Ulrich bundles on geometrically ruled surface}. Preprint.

\bibitem{C--K--M1}
  E. Coskun, R.S. Kulkarni, Y. Mustopa: \emph{The geometry of Ulrich bundles on del Pezzo surfaces}. J. Algebra \textbf{375} \rm(2013), 280--301.
  
\bibitem{C--K--M2}
  E. Coskun, R.S. Kulkarni, Y. Mustopa: \emph{Pfaffian quartic surfaces and representations of Clifford algebras}. Doc. Math.  \textbf{17} \rm (2012), 1003--1028.

\bibitem{E--S--W}
D. Eisenbud, F.O. Schreyer, J. Weyman: \emph{Resultants and Chow forms via exterior syzigies}.  J. Amer. Math. Soc. \textbf{16} \rm(2003), 537--579.

\bibitem{F--PL}
D. Faenzi, J. Pons-Llopis: \emph{The CM representation type of projective varieties}. Available at arXiv:1504.03819 [math.AG].

\bibitem{G--P}
F.J. Gallego, B. P. Purnaprajna: \emph{Normal presentation on elliptic ruled surfaces}. J. Algebra \textbf{186} (1996), 597--625.


\bibitem{Ha2}
  R. Hartshorne: \emph{Algebraic geometry}. G.T.M. 52, Springer \rm (1977).

\bibitem{H--L}
D. Huybrechts, M. Lehn: \emph {The geometry of moduli spaces of sheaves. Second edition}. Cambridge Mathematical Library, Cambridge U.P. \rm (2010).

\bibitem{I--M}
M. Id\ga a, E. Mezzetti: \emph{Smooth non--special surfaces in $\p4$}. Manuscripta Math. \textbf{68} (1990), 57--77.

\bibitem{M--R}
E. Mezzetti, K. Ranestad: \emph{The non--existence of a smooth sectionally non--special surface of degree $11$ and sectional genus $8$ in the projective fourspace}. Manuscripta Math. \textbf{70} (1991), 279--283.

\bibitem{MR}
R.M. Mir\'o--Roig: \emph{The representation type of rational normal scrolls}. Rend. Circ. Mat. Palermo \textbf{62} (2013), 153--164.

\bibitem{MR--PL1}
R.M. Mir\'o--Roig, J. Pons--Llopis: \emph{Representation Type of Rational ACM Surfaces $X\subseteq\p4$}. Algebr. Represent. Theor. \textbf{16} \rm (2013), 1135--1157.

\bibitem{MR--PL2}
R.M. Mir\'o--Roig, J. Pons--Llopis: \emph{$N$--dimensional Fano varieties of wild representation type}. J. Pure Appl. Algebra \textbf{218} \rm (2014), 1867--1884.


\bibitem{Mu}
D. Mumford: \emph{Lectures on curves on an algebraic surface. With a section by G. M. Bergman}. Annals of Mathematics Studies, 59 Princeton U. P. \rm (1966).

\bibitem{Na}
M. Nagata: \emph{On self--intersection numbers of a section on a ruled surface}. Nagoya Math. J., \textbf{37} (1970), 191--196.

\bibitem{PL--T}
J. Pons--Llopis, F. Tonini: \emph{ACM bundles on del Pezzo surfaces}. Matematiche (Catania) \textbf{64} (2009), 177--211.

\bibitem{Se}
F. Serrano: \emph{Divisors of bielliptic surfaces and embeddings in $\p4$}.
Math. Z. \textbf{203} (1990), 527--533. 


\end{thebibliography}
\end{document}